\theoremstyle{plain}
\newtheorem{prop}{Proposition}
\newtheorem{theo}[prop]{Theorem}
\newtheorem{lemm}[prop]{Lemma}
\theoremstyle{remark}
\newtheorem{rema}[prop]{Remark}
\theoremstyle{definition}
\newtheorem{defi}[prop]{Definition}
\newtheorem{assu}[prop]{Assumption}
\numberwithin{equation}{section}
\newcommand{\C}{{\mathbb C}}
\newcommand{\PP}{{\mathbb P}}
\newcommand{\Q}{{\mathbb Q}}
\newcommand{\N}{{\mathbb N}}
\newcommand{\R}{{\mathbb R}}
\newcommand{\Z}{{\mathbb Z}}
\newcommand{\eqto}{\stackrel{\lower1.5pt\hbox{$\scriptstyle\sim\,$}}\to}
\DeclareMathOperator{\Gal}{Gal}
\DeclareMathOperator{\Pic}{Pic}
\DeclareMathOperator{\Spec}{Spec}
\DeclareMathOperator{\Proj}{Proj}
\DeclareMathOperator{\Hom}{Hom}
\DeclareMathOperator{\End}{End}
\DeclareMathOperator{\Br}{Br}
\begin{document}
\title[Picard and Brauer groups]
{Effective computation of Picard groups and Brauer-Manin obstructions
of degree two $K3$ surfaces over number fields}
\author{Brendan Hassett}
\address{
  Department of Mathematics,
  Rice University,
  Houston, TX 77005, USA
}
\email{hassett@rice.edu}
\author{Andrew Kresch}
\address{
  Institut f\"ur Mathematik,
  Universit\"at Z\"urich,
  Winterthurerstrasse 190,
  CH-8057 Z\"urich, Switzerland
}
\email{andrew.kresch@math.uzh.ch}
\author{Yuri Tschinkel}
\address{
  Courant Institute,
  251 Mercer Street,
  New York, NY 10012, USA
}
\email{tschinkel@cims.nyu.edu}

\begin{abstract}
Using the Kuga-Satake correspondence
we provide an effective algorithm for the computation of the
Picard and Brauer groups of $K3$ surfaces of degree $2$ 
over number fields.
\end{abstract}
\maketitle

\section{Introduction}
\label{sec:introduction}
Let $X$ be a
smooth projective variety over a number field $k$ and
$\Br(X)$ its Brauer group.
The quotient $\Br(X)/\Br(k)$
plays an important role in the study of arithmetic properties of $X$.
Its effective computation
is possible in certain cases, for example:
when $X$ is a geometrically rational surface (see, e.g., \cite{kst}),
a Fano variety of dimension at most 3 (\cite{kteff}),
or a diagonal $K3$ surface over $\Q$ (\cite{ISZ}, \cite{kt}, \cite{swinnertondyer}).

Skorobogatov and Zarhin proved the finiteness of
$\Br(X)/\Br(k)$ when $X$ is a $K3$ surface \cite{skorobogatovzarhin}.
Letting $X_{\bar k}$ denote $X\times_{\Spec k}{\Spec \bar k}$,
where $\bar k$ is an algebraic closure of $k$,
there is the natural map
\begin{equation}
\label{XtoXbar}
\Br(X)\to\Br(X_{\bar k}).
\end{equation}
Its kernel is known as algebraic part of the
Brauer group.
This is a finite group, and may be identified with the Galois cohomology
\begin{equation}
\label{h1}
H^1(\Gal(\bar k/k),\Pic(X_{\bar k})).
\end{equation}
Knowledge of $\Pic(X_{\bar k})$ is essential to its computation.
A first goal of this paper is the effective computation of
$\Pic(X_{\bar k})$ when $X$ is a $K3$ surface of degree 2 over a number field.
Special cases and examples have been treated previously by, e.g.,
van Luijk \cite{vl}, while a more general treatment, that is however
conditional on the Hodge conjecture, appears in \cite{charles}.

The image of \eqref{XtoXbar} is contained in the invariant
subgroup
\[\Br(X_{\bar k})^{\Gal(\bar k/k)}.\]
The finiteness of this invariant subgroup is
one of the main results of \cite{skorobogatovzarhin}, yet the proof does
not yield an effective bound.
In this paper we give an \emph{effective} bound
for the order of the group $\Br(X)/\Br(k)$.
Combined with the results in \cite{kt}, this permits the effective
computation of the subset
\[
X(\mathbb{A}_k)^{\Br(X)}\subseteq X(\mathbb{A}_k)
\]
of Brauer-Manin unobstructed adelic points of $X$.
Examples of computations of Brauer-Manin obstructions on $K3$ surfaces can
be found in
\cite{bright}, \cite{hvv}, \cite{ieronymou}, \cite{ssd}, \cite{wittenberg}.
The results here,
combined with results in \cite{ctskoro},
imply as well an effective
bound for the order of $\Br(X_{\bar k})^{\Gal(\bar k/k)}$.

The finiteness results of \cite{skorobogatovzarhin} are based on
the Kuga-Satake construction, which associates an abelian variety of
dimension $2^{19}$ to a given $K3$ surface
and relates their cohomology,
together with the Tate conjecture for abelian varieties,
proved by Faltings \cite{faltings}.
The Kuga-Satake correspondence is \emph{conjectured} to be given by an
algebraic correspondence,
but is proved only in some special cases, e.g.,
\cite{vangeemen}, \cite{inose}, \cite{shiodainose}.
Assuming this (in some effective form), one could apply
effective versions of Faltings' results, obtained by
Masser and W\"ustholz \cite{mw1}, \cite{mw2}, \cite{mw3}, \cite{mw4}, \cite{mw5}
(see also \cite{bost}).
Lacking this, we treat the transcendental construction directly, showing
that computations to bounded precision can replace an algebraic
correspondence, in practice.

The construction proceeds in several steps.
First of all, rigidity allows us to construct the Kuga-Satake morphism between
moduli spaces (of $K3$ surfaces with polarization and level structure on
one side and polarized abelian varieties with level structure on the other)
algebraically over a number field, at least up to an explicit finite
list of possibilities.
This allows us to identify an abelian variety corresponding to a $K3$ surface,
and its field of definition.
For the computation of the induced map on homology we work with
integer coefficients and simplicial complexes, and computations up to bounded
precision suffice to determine all the necessary maps.
Then we follow the proof in Section 4 of \cite{skorobogatovzarhin}
and obtain the following result.

\begin{theo}
\label{mainresult}
Let $k$ be a number field and $X$ a $K3$ surface of degree $2$ over $k$,
given by an explicit equation.
Then there is an effective bound
on the order of $\Br(X)/\Br(k)$.
\end{theo}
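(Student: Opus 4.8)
The plan is to decompose $\Br(X)/\Br(k)$ into an algebraic part and a transcendental part and to bound each effectively. Writing $\Br_1(X)$ for the kernel of \eqref{XtoXbar}, the Hochschild--Serre spectral sequence (together with the vanishing of $H^3(\Gal(\bar k/k),\bar k^\times)$ for a number field) yields an exact sequence
\[
0\to \Br_1(X)/\Br(k)\to \Br(X)/\Br(k)\to \Br(X_{\bar k})^{\Gal(\bar k/k)},
\]
in which $\Br_1(X)/\Br(k)$ is the finite group \eqref{h1}. It therefore suffices to produce (i) an effective bound on \eqref{h1}, and (ii) an effective bound on the image of $\Br(X)$ in $\Br(X_{\bar k})^{\Gal(\bar k/k)}$.

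For (i) the task is to make $\Pic(X_{\bar k})$ explicit as a Galois module. First I would bound the geometric Picard rank $\rho$ from above, for instance by reducing $X$ at a prime of good reduction and bounding the rank of the N\'eron--Severi group of the reduction via the action of Frobenius on $\ell$-adic cohomology, which dominates $\rho$. Then, following the steps sketched in the introduction, I would invoke rigidity to realize the Kuga--Satake abelian variety $A$ attached to $X$ over a number field --- up to an explicit finite list of possibilities --- together with its field of definition, compute the induced map on integral homology by working with simplicial complexes and period integrals to bounded precision, and from this read off the transcendental lattice $T(X)$, hence the Picard lattice $\Pic(X_{\bar k})=T(X)^{\perp}$ inside $H^2$, together with the finite quotient of $\Gal(\bar k/k)$ through which it acts. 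Once $\Pic(X_{\bar k})$ is pinned down as an explicit finite Galois module, \eqref{h1} is computed by a finite group-cohomology calculation.

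For (ii) I would follow Section~4 of \cite{skorobogatovzarhin}. The Kuga--Satake correspondence embeds the transcendental part of $H^2(X_{\bar k},\widehat{\Z}(1))$ into the cohomology of $A$, so that the Galois-fixed transcendental classes --- and in particular the transcendental Brauer classes lifting to $\Br(X)$ --- are controlled by the Galois-module structure of $H^1(A_{\bar k},\widehat{\Z})$ and by the endomorphisms of $A$. The Tate conjecture for abelian varieties, proved by Faltings \cite{faltings}, together with semisimplicity of the associated Galois representations, then forces the relevant invariant classes to be accounted for by genuine algebraic data on $A$ and limits the primes that can contribute; since $A$, its field of definition, and a sufficient level structure are available as explicit outputs of the moduli-space construction, this yields an effective bound. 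The (conjectural) algebraicity of the Kuga--Satake correspondence and the (ineffective) theorems of Faltings are replaced, throughout, by the bounded-precision transcendental computations; alternatively one could substitute the effective forms of Faltings' results due to Masser and W\"ustholz \cite{mw1}, \cite{mw2}, \cite{mw3}, \cite{mw4}, \cite{mw5}.

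The main obstacle is to make that last point rigorous: to show that a computation carried out to some explicitly bounded precision --- of the period matrix of $X$, of the Kuga--Satake lattice and its complex structure, and of the comparison isomorphisms on integral (co)homology --- suffices to determine, with certainty, the integral Hodge structure, the transcendental lattice, the field of definition of $A$, and the Galois action, and hence the claimed bound. Producing an a priori estimate for how much precision is enough, and thereby converting the heuristic ``in practice'' into a genuine algorithm with an effective output, is the crux of the argument.
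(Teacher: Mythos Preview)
Your overall decomposition into the algebraic and transcendental parts, and your intention to compute $\Pic(X_{\bar k})$ via the Kuga--Satake variety and then to bound the transcendental Brauer group by following \cite{skorobogatovzarhin} with Masser--W\"ustholz in place of Faltings, matches the paper's strategy. Two points, however, separate your sketch from a proof.

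\medskip

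\textbf{Part (i): how $\Pic$ is actually computed.} You propose to ``read off $T(X)$'' from the Kuga--Satake data and then take $\Pic=T(X)^\perp$. This is backwards: the transcendental lattice is \emph{defined} as the orthogonal of $\Pic$, so you cannot identify $T(X)$ without first knowing which classes are algebraic. Approximate period computations only give the Hodge filtration to bounded precision and cannot by themselves certify which integral $(1,1)$-classes lie in the image of $\Pic$. The paper's mechanism is different and is what makes the step effective: the primitive lattice $P$ embeds in $\End(C^+(P))$, which in turn sits in $H^2(A\times A,\Z)$, and the $(0,0)$-classes in $P$ are then recovered from the N\'eron--Severi group of the abelian variety $A\times A$. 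The latter is computed effectively via the Masser--W\"ustholz endomorphism estimates (their bound on the discriminant of $\End(A)$). Your preliminary reduction-mod-$p$ upper bound on $\rho$ is harmless but plays no role.

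\medskip

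\textbf{Part (ii): the missing mechanism.} Saying that Tate plus semisimplicity ``forces the relevant invariant classes to be accounted for'' is not enough. The paper's argument is concrete: one embeds $T_{S}/\ell$ as a Galois submodule of $\End_\Gamma(A_\ell)$ (by wedging with a primitive algebraic class when $\rho\ge 2$, and via $\wedge^{20}T_S$ when $\rho=1$), observes by Hodge type that its image is disjoint from $\End(A)/\ell$, and then invokes the specific Masser--W\"ustholz theorem that the cokernel of $\End(A)\to \End_\Gamma(A_m)$ is annihilated by an effective constant $M$. For $\ell\nmid M$ this gives $(T_{S,\ell}/\ell)^\Gamma=0$ outright; for the finitely many bad $\ell$ one repeats the argument with $\ell^n$ in place of $\ell$, tracking the torsion coming from the discriminant of $\Pic$, to get a bound on $(T_{S,\ell}/\ell^n)^\Gamma$ independent of $n$. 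Your proposal does not identify this embedding or the particular Masser--W\"ustholz input (\cite{mw5}, Theorem~1), and without them there is no route from ``effective Tate for $A$'' to ``effective bound on $\Br(X_{\bar k})^\Gamma$''. The precision issue you flag in your last paragraph is real, but it is handled earlier, in the construction of the Kuga--Satake morphism between moduli spaces; it is not the crux of step~(ii).
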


In fact, we provide an effective bound on $\Br(X)/\Br(k)$
when $X$ has an ample line bundle of arbitrary degree $2d$
provided that there is an effective construction of the moduli space of
primitively quasi-polarized $K3$ surfaces of degree $2d$
(see Definition \ref{def:pola}).
For $d=1$ this is known, via effective geometric invariant
theory (see Remark \ref{whenknowDmodGamma}).

The first step of the proof is, as mentioned above, the
effective computation of the Galois module $\Pic(X_{\bar k})$.
Since this is finitely generated and torsion-free,
this permits the effective computation of the Galois cohomology
group \eqref{h1}.
Hence the proof of Theorem \ref{mainresult} is
reduced to effectively bounding the image of \eqref{XtoXbar}.

\medskip
\noindent\textbf{Acknowledgements.}
The first author was supported by NSF grants 0901645 and 0968349.
The second author was supported by the SNF.
The third author was supported by NSF grants 0739380, 0901777, and 0968349.
The authors benefited from helpful discussions with
N. Katz and G. W\"ustholz.

\section{Effective algebraic geometry}
\label{sec:effalggeo}
We work over an algebraic number field $k$ and denote by $\bar k$ its
algebraic closure.
The term \emph{variety} refers to geometrically integral separated
scheme of finite type over $k$.
We say that a quasiprojective variety or scheme $X$
is given by explicit equations
if homogeneous equations are supplied defining a scheme in a projective space
$\PP^M$ for some $M$
and a closed subscheme whose complement is $X$.
By convention $\mathcal{O}_X(1)$ will denote the restriction of
$\mathcal{O}_{\PP^M}(1)$ to $X$.
The base-change of $X$ to a field extension $k'$ of $k$ will be denoted
$X_{k'}$.

\begin{lemm}
\label{finitemorlem}
Let $X$ be a quasiprojective scheme, given by explicit equations.
Let $f:Y\to X$ be a finite morphism, given by explicit equations
on affine charts.
Then we may effectively determine integers $n$ and $N$ and an embedding
$Y\to X\times \PP^N$, such that $f$ is the composite of projection to $X$
with the embedding and the pullback of $\mathcal{O}_{\PP^N}(1)$ to $X\times \PP^N$
restricts to $f^*\mathcal{O}_X(n)$.
In particular, $f^*\mathcal{O}_X(n+1)$ is very ample on $Y$ and
we may obtain explicit equations for $Y$ as a quasiprojective scheme.
\end{lemm}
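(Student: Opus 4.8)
The plan is to present $Y$ as the relative spectrum $\Spec_X\mathcal{A}$ of the finite $\cO_X$-algebra $\mathcal{A}:=f_*\cO_Y$, to embed it over $X$ by means of enough global sections of a twist $\mathcal{A}(n):=\mathcal{A}\otimes_{\cO_X}\cO_X(n)$, and to extract the remaining assertions from the Segre embedding. First I would assemble $\mathcal{A}$ explicitly. By hypothesis $X$ is the complement of a closed subscheme $W$ in a closed subscheme $Z\subseteq\PP^M$, and, refining the given chart data, on the standard affine cover $U_\alpha=X\cap D_+(x_{i_\alpha})$ we are handed module-finite ring homomorphisms $\cO_X(U_\alpha)\to\cO_Y(f^{-1}(U_\alpha))$ together with module generators (note $f^{-1}(U_\alpha)$ is affine since $f$ is finite). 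Gluing the associated quasi-coherent modules produces $\mathcal{A}$ as an explicit coherent $\cO_X$-module carrying its unital algebra structure, with $Y=\Spec_X\mathcal{A}$ and $f$ its structure morphism; in particular $f_*(f^*\cO_X(n))=\mathcal{A}(n)$ by the projection formula.

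Next I would produce $n$ and the sections. Extending $\mathcal{A}$, as an $\cO_{\bar X}$-module (forgetting the algebra structure), to a coherent sheaf on the projective closure $\bar X\subseteq\PP^M$ and invoking an effective form of Serre's theorem via a Castelnuovo--Mumford regularity bound, I obtain an explicit $n$ for which $\mathcal{A}(n)$ is globally generated, and I choose finitely many sections $t_0,\dots,t_N\in\Gamma(X,\mathcal{A}(n))=\Gamma(Y,f^*\cO_X(n))$ generating $\mathcal{A}(n)$ as an $\cO_X$-module — alternatively one simply increases $n$ until global generation, an effectively testable condition, holds. Since the $t_i$ generate $\mathcal{A}(n)$ they generate the line bundle $f^*\cO_X(n)$ on $Y$, hence define a morphism $g\colon Y\to\PP^N$ with $g^*\cO_{\PP^N}(1)=f^*\cO_X(n)$ and $g^*T_i=t_i$; I set $\varphi=(f,g)\colon Y\to X\times\PP^N$.

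The heart of the argument is that $\varphi$ is a closed immersion, which is local on $X\times\PP^N$. Over an affine $U=\Spec R$ from the cover, trivialize $\cO_X(n)|_U$ by a section $u$ and write $t_i|_U=a_iu$; then $f^{-1}(U)=\Spec A$ with $A$ a finite $R$-algebra, $A=\sum_i Ra_i$ and $1\in A$. Hence $(a_0,\dots,a_N)=A$, so $\Spec A=\bigcup_j D(a_j)$, and on $D(a_j)=\Spec A_{a_j}$ the morphism to the chart $\{T_j\neq0\}$ of $U\times\PP^N$ corresponds to $T_i/T_j\mapsto a_i/a_j$; writing $1=\sum_i r_ia_i$ gives $1/a_j=\sum_i r_i(a_i/a_j)$ in the image, whence every $b/a_j^m$ with $b\in A$ is in the image and the ring map is surjective. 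Thus $\varphi$ restricts to a closed immersion over each $U\times\PP^N$, so $\varphi$ is a closed immersion; by construction $\mathrm{pr}_X\circ\varphi=f$ and $\varphi^*\mathrm{pr}_{\PP^N}^*\cO_{\PP^N}(1)=f^*\cO_X(n)$. (One can also see $\varphi$ is finite, hence proper — it factors as the graph $Y\hookrightarrow Y\times\PP^N$ followed by $f\times\mathrm{id}$ — but the local computation yields the closed immersion directly.)

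Finally, composing $\varphi$ with $X\times\PP^N\hookrightarrow\PP^M\times\PP^N$ and the Segre embedding into $\PP^{(M+1)(N+1)-1}$ realizes $Y$ as a locally closed subscheme there; explicit equations for it are obtained by elimination from the equations of $Z$ and $W$ and the relations defining $Y$ in $X\times\PP^N$, so $Y$ is given by explicit equations as a quasiprojective scheme. The restriction of $\cO(1,1)$ to $Y$ equals $f^*\cO_X(1)\otimes f^*\cO_X(n)=f^*\cO_X(n+1)$, which is therefore very ample on $Y$. I expect the main obstacle to be twofold: conceptually, the closed-immersion step uses decisively that $\mathcal{A}$ is unital (so that $1=\sum_i r_ia_i$ is available), i.e.\ that $f$ is a morphism of $X$-schemes; and for effectivity the delicate point is the global-generation step, turning Serre's theorem on a merely quasiprojective $X$ into an explicit $n$ and explicit generating sections, which I handle by passing to the projective closure together with effective regularity bounds.
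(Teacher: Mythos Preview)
Your proof is correct and follows essentially the same strategy as the paper's: produce enough global sections of the twisted pushforward $(f_*\mathcal{O}_Y)(n)$ to generate it, and use these as $\PP^N$-coordinates for an embedding $Y\hookrightarrow X\times\PP^N$, then apply Segre. The paper's (very brief) argument obtains $n$ by the elementary device of extending each local generator to a global section after clearing denominators, whereas you invoke effective Castelnuovo--Mumford regularity on the projective closure and spell out the closed-immersion verification that the paper leaves implicit; these are minor variations on the same idea.
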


\begin{proof}
The morphism $f$ may be presented on affine patches by
finitely many new indeterminates adjoined 
to the coordinate rings of the patches (with additional
relations).
We may determine, effectively, an integer $n$ such that
each extends to a section of $\mathcal{O}_X(n)$.
Then for suitable $N$ we have an embedding
$Y\to X\times \PP^N$ satisfying the desired conditions.
\end{proof}

We collect effectivity results that we will be using freely.

\begin{itemize}
\item \emph{Effective normalization in a finite function field extension}:
Given a quasiprojective variety $X$
over $k$, presented by means of explicit equations, and another algebraic
variety $V$ with generically finite morphism $V\to X$ given by
explicit equations on affine charts,
to compute effectively
the normalization $Y$ of $X$ in $k(V)$, with finite morphism $Y\to X$.
See \cite{mesnager} and references therein.
\item \emph{A form of effective resolution of singularities}:
Given a nonsingular quasiprojective variety $X^\circ$ over $k$,
to produce a nonsingular projective variety $X$ and
open immersion $X^\circ\to X$, such that
$X\smallsetminus X^\circ$ is a simple normal crossings divisor.
This follows by standard formulations of Hironaka resolution theorems,
for which effective versions are available; see, e.g., \cite{bgmw}.
\item \emph{Effective invariant theory for actions of projective varieties}:
Given a projective variety $X$
and a linearized action of a reductive algebraic group $G$ on $X$ for
$L=\mathcal{O}_X(1)$, to
compute effectively the subsets $X^{ss}(L)$ and $X^s(L)$ of
geometric invariant theory,
the projective variety $X//G$, open subset $U\subset X//G$
corresponding to $X^s(L)$, and quotient morphisms
$X^{ss}(L)\to X//G$ and $X^s(L)\to U$.
This is standard, using effective computation of invariants
$k[V]^G$ for a finitely generated $G$-module $V$
\cite{derksen}, \cite{kempf}, \cite{popov}.
\end{itemize}

\begin{lemm}
\label{hilbschemeargument}
Let $X$ be a quasiprojective normal variety over $k$,
given by explicit equations, and let $U\subset X$ be a nonempty subvariety.
Given $d\in\N$, there is an effective procedure to produce a finite
extension $k'$ of $k$ and
a finite collection of normal quasiprojective varieties
$Y^{(1)}$, $\ldots$, $Y^{(m)}$ over $k'$, with finite morphisms
$f_i:Y^{(i)}\to X_{k'}$,
such that for each $i$ the restriction of $f_i$ to $f_i^{-1}(U_{k'})$
is \'etale of degree $d$,
and such that the $Y^{(i)}_{\bar k}\to X_{\bar k}$ ($i=1$, $\ldots$, $m$)
are up to isomorphism all the degree $d$ coverings by
normal quasiprojective varieties over $\bar k$ which are
\'etale on the pre-image of $U_{\bar k}$.
\end{lemm}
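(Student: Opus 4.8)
The plan is to produce all the coverings as members of an explicitly constructed family over a Hilbert scheme, after first obtaining an \emph{a priori} bound on their complexity. I begin with a harmless reduction to the projective case. Since $X$ is normal, a normal covering $Y\to X$ that is \'etale over $U$ is the restriction to $X$ of a unique normal covering of any fixed normal compactification of $X$ — namely the normalization of that compactification in the function field (or total ring of fractions) of $Y$ — and this covering is again \'etale over $U$; conversely every such covering of the compactification restricts to one of $X$. Taking the compactification to be the closure of $X$ in $\PP^M$ followed by effective normalization, I may therefore assume $X$ projective and normal, with $L=\cO_X(1)$ very ample, at the cost of restricting the coverings produced back to the original $X$ at the end. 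Write $W=X\smallsetminus U$.

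The crux is an effective bound on the complexity of the coverings, and I expect this to be the main obstacle. Let $f\colon Y\to X_{\bar k}$ be a normal covering of degree at most $d$, \'etale over $U_{\bar k}$. Its branch locus is contained in $W_{\bar k}$, and since $\chara k=0$ all ramification is tame, so along each codimension-one component of $W$ the discriminant has multiplicity at most $d-1$; hence the discriminant divisor of $f$ is dominated by the fixed effective divisor $(d-1)W_1$, where $W_1$ is the reduced divisorial part of $W$. To convert this into a bound on the $Y$ themselves I would pass, via effective resolution of singularities, to a smooth projective model (which is harmless since by the normalization argument such coverings correspond bijectively to those of $X$), shrink $U$ to a dense smooth open subset — which only enlarges the family under consideration and hence cannot invalidate the bound sought — and combine the bound on the discriminant with a curve-section argument: a general complete-intersection curve section $C$ is a smooth projective curve whose genus and whose number of intersection points with $W$ are effectively bounded in terms of the given equations, the restriction $Y|_C\to C$ determines $Y\to X$ by the Lefschetz theorem for fundamental groups of quasi-projective varieties, and Riemann–Hurwitz bounds the genera of the components of the normalization of $Y|_C$. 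The outcome of this analysis is effectively computable integers $n,N$ and a finite set $\Sigma$ of polynomials such that every such $Y$ admits an embedding $Y\hookrightarrow X_{\bar k}\times\PP^N$ over $X_{\bar k}$ with $\cO(0,1)|_Y=f^*\cO_X(n)$ and Hilbert polynomial, with respect to $\cO(1,1)$, lying in $\Sigma$. Turning the qualitative finiteness of the coverings into these explicit constants is the delicate point.

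With $N$ and $\Sigma$ fixed, form the disjoint union $\mathcal H$ of the Hilbert schemes $\mathrm{Hilb}^P_{(X\times\PP^N)/k}$ for $P\in\Sigma$; these are projective $k$-schemes presentable by explicit equations. Inside $\mathcal H$, the locus $\mathcal H^\circ$ over which the universal subscheme is finite and flat of degree $d$ over $X$, has geometrically integral and normal fibres, and restricts over $U$ to an \'etale covering of degree $d$, is a constructible subset cut out effectively (the first two conditions are open, the last is constructible by Chevalley's theorem). The action of $\mathrm{PGL}_{N+1}$ on $\mathcal H^\circ$ has, by the topological finite generation of $\pi_1(U_{\bar k})$ in characteristic zero, only finitely many orbits, so $\mathcal H^\circ$ decomposes into finitely many locally closed $\mathrm{PGL}_{N+1}$-invariant strata, each a single orbit; this decomposition, together with a $\bar k$-point in each stratum, can be produced effectively using effective invariant theory on the closure of $\mathcal H^\circ$ in $\mathcal H$ (with the $\mathrm{GL}_{N+1}$-equivariant Pl\"ucker linearization). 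Letting $k'$ be a common field of definition of these points and enlarging it so that the relevant components become geometrically irreducible, the corresponding fibres of the universal family give normal quasi-projective varieties $Y^{(1)},\dots,Y^{(m)}$ over $k'$ with finite morphisms $f_i\colon Y^{(i)}\to X_{k'}$ which by construction are \'etale of degree $d$ over $f_i^{-1}(U_{k'})$, and which by the boundedness step exhaust, up to isomorphism over $\bar k$, all the required coverings. Restricting each $Y^{(i)}$ to the original $X$ and reading off explicit equations from the embedding in $X_{k'}\times\PP^N$ (as in Lemma \ref{finitemorlem}) completes the construction.
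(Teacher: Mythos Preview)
Your overall architecture differs from the paper's, and the difference matters because your version leaves the crucial boundedness step unfinished. The paper does not try to bound an embedding of the whole cover $Y$ into $X\times\PP^N$. Instead it chooses a generic linear projection $U\to\PP^{N-1}$ (with $N=\dim X$) so that the fiber over the generic point $\eta=\Spec k(x_1,\dots,x_{N-1})$ is a smooth quasi-projective \emph{curve} $C_\eta$ over the function field. Any cover $Y\to X$ of the required type restricts to a curve cover $Y\times_X\eta\to C_\eta$; Riemann--Hurwitz bounds the genus of its smooth compactification, and an effective Hurwitz-type parameter space over $\eta$ lists the finitely many candidate curve covers. The point is that the function field of each candidate curve is already a finite extension of $k(X)$, so the effective normalization procedure of Section~\ref{sec:effalggeo} recovers $Y$ directly from its generic fiber. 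No bound on an embedding of $Y$ is ever needed.

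By contrast, your curve-section argument via Lefschetz only shows that $Y\to X$ is \emph{determined} by its restriction to a general complete-intersection curve $C\subset X$; it does not give you back a function field extension of $k(X)$ (since $k(C)$ is not an extension of $k(X)$), and it does not by itself furnish effective integers $n,N$ and Hilbert polynomials $\Sigma$ for an embedding $Y\hookrightarrow X\times\PP^N$. You flag this step as ``the delicate point'' but do not carry it out; making it effective would require something like an effective bound on the twist needed to globally generate $f_*\cO_Y$, or an effective Matsusaka-type result, neither of which you supply. The paper's generic-fiber trick sidesteps this entirely: it replaces a boundedness problem in dimension $\dim X$ by a finiteness problem for covers of a single curve over a function field, where Riemann--Hurwitz plus effective normalization suffice.
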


\begin{proof}
Let $N=\dim X$.
Shrinking $U$ if necessary we may suppose that there is morphism
$U\to \PP^{N-1}$, given by a suitable linear projection, such that
the generic fiber is smooth and one-dimensional, i.e., the restriction
to the generic point $\eta=\Spec(k(x_1,\ldots,x_{N-1}))$ is a
nonsingular quasi-projective curve $C_{\eta}$.
We may effectively compute a nonsingular projective compactification
$\overline{C}_{\eta}$.
The genus $g=g(\overline{C}_{\eta})$ and degree
$e=\deg(\overline{C}_{\eta}\smallsetminus C_{\eta})$, together with $d$,
determine by the Riemann-Hurwitz formula an upper bound $g_{\mathrm{max}}$
on the genus of $Y\times_X \eta$.
Using effective Hilbert scheme techniques we construct parameter spaces
containing every isomorphism class of genus $g'\le g_{\mathrm{max}}$ curve
equipped with a morphism of degree $d$ to $\overline{C}_{\eta}$.
That the ramification divisor in contained in the
scheme-theoretic pre-image of $\overline{C}_{\eta}\smallsetminus C_{\eta}$
is a closed condition and one that may be implemented effectively
(cf.\ \cite[Corollary 3.14]{mochizuki}) and determines a finite field extension
$k'$ of $k$ and a finite set of
candidates for
$Y\times_X \eta$, over $k'$.
Applying effective normalization to each of these candidates and
eliminating those which are not \'etale over $U_{k'}$,
we obtain the $Y^{(i)}$.
\end{proof}

\section{Baily-Borel compactifications}
\label{sec:bb}
Let $\mathbb{D}=G/K$ be a bounded symmetric domain and
$\Gamma$ an arithmetic subgroup of $G$.
It is known that $\Gamma\backslash \mathbb{D}$ admits a canonical compactification
$(\Gamma\backslash \mathbb{D})^*$,
the Baily-Borel compactification, which is a normal projective variety
\cite{bb};
however, the construction of this variety does not supply algebraic equations.
Let $\widetilde{\Gamma}<\Gamma$ be a finite-index subgroup, and assume that
$\widetilde{\Gamma}$ is neat.
(Recall that an arithmetic subgroup is called neat if, for every element,
the subgroup of $\C^*$ generated by its eigenvalues is torsion-free.)
In this section, we show that if we know
$\Gamma\backslash \mathbb{D}$ as a quasiprojective variety (with explicit
equations over some number field),
we can effectively construct $\widetilde{\Gamma}\backslash \mathbb{D}$ as
a quasiprojective variety (as one of finitely many candidates),
together with its Baily-Borel compactification.

In the following, we let $k$ denote a number field.

\begin{lemm}
\label{neatlem}
Let $G$ be a linear algebraic group over $k$ and
$\Gamma$ an arithmetic subgroup.
There is an effective procedure to construct a neat subgroup of
finite index in $\Gamma$.
\end{lemm}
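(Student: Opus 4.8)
The plan is to exhibit a deep enough congruence subgroup of $\Gamma$ and verify, by a valuation estimate at a prime above a small rational prime, that it is neat --- this is Borel's classical argument, rendered effective. Since $G$ is linear, fix a closed embedding $G\subseteq\mathrm{GL}_{n,k}$ supplied with the data. Arithmetic subgroups are finitely generated, so we may take as input a finite set of generators for $\Gamma$, which are matrices in $\mathrm{GL}_n(k)$; clearing denominators in these generators and their inverses yields (effectively) a finite set $S$ of finite places of $k$ with $\Gamma\subseteq\mathrm{GL}_n(\cO_{k,S})$, where $\cO_{k,S}$ denotes the ring of $S$-integers.

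First I would fix the congruence level. Choose a rational prime $\ell\ge 3$ unramified in $k$ (for instance $\ell\nmid\disc(k)$) and lying below no place of $S$, factor $\ell\cO_k$, and select a prime $\mathfrak p\mid\ell$ with residue field $\kappa(\mathfrak p)$. Reduction modulo $\mathfrak p$ is defined on $\mathrm{GL}_n(\cO_{k,S})$, giving a homomorphism $\rho\colon\Gamma\to\mathrm{GL}_n(\kappa(\mathfrak p))$ with finite target. Applying $\rho$ to the generators and enumerating the subgroup they generate computes the finite image $\rho(\Gamma)$, hence the index $[\Gamma:\ker\rho]=|\rho(\Gamma)|$ together with coset representatives expressed as words in the generators (a word $w$ lies in a given coset precisely when $\rho(w)$ has the prescribed value). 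The Reidemeister--Schreier rewriting process then outputs an explicit finite generating set for the finite-index subgroup $\widetilde\Gamma:=\ker\rho\subseteq\Gamma$.

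It remains to check that $\widetilde\Gamma$ is neat, the one step with genuine content. Fix a prime $\mathfrak P$ of $\overline\Q$ above $\mathfrak p$ and normalize the valuation $v_{\mathfrak P}$ so that $v_{\mathfrak P}(\ell)=1$; since $\mathfrak p$ is unramified over $\ell$, every element of $\mathfrak p\cO_{k,S}$ then has $v_{\mathfrak P}\ge 1$. Let $\gamma\in\widetilde\Gamma$ with eigenvalues $\lambda_1,\dots,\lambda_n$; as $\det\gamma\in\cO_{k,S}^{*}$, each $\lambda_i$ is an $S$-unit, hence a $\mathfrak P$-adic unit. The characteristic polynomial $q(T)=\det(TI-\gamma)$ has, after the substitution $T\mapsto T+1$, the form $q(T+1)=\det\bigl(TI-(\gamma-I)\bigr)$, whose coefficient of $T^{n-j}$ is, up to sign, the sum of the principal $j\times j$ minors of $\gamma-I$; since $\gamma\equiv I\pmod{\mathfrak p}$, every entry of $\gamma-I$ has $v_{\mathfrak P}\ge 1$, so that coefficient has $v_{\mathfrak P}\ge j$. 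From the Newton polygon one sees that every root $\lambda_i-1$ of $q(T+1)$ satisfies $v_{\mathfrak P}(\lambda_i-1)\ge 1$. Since the elements $u$ with $v_{\mathfrak P}(u-1)\ge 1$ form a subgroup of the $\mathfrak P$-adic units, any monomial $\mu=\prod_i\lambda_i^{a_i}$ likewise satisfies $v_{\mathfrak P}(\mu-1)\ge 1$. If such a $\mu$ were a nontrivial root of unity, then $\mu\equiv 1\pmod{\mathfrak P}$ would force its order to be a power of $\ell$, say $\ell^{s}$ with $s\ge 1$ (reduction being injective on roots of unity of order prime to $\ell$); but then $v_{\mathfrak P}(\mu-1)=1/\varphi(\ell^{s})<1$, as $\varphi(\ell^{s})=\ell^{s-1}(\ell-1)\ge 2$ --- a contradiction. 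Hence the subgroup of $\C^{*}$ generated by the eigenvalues of $\gamma$ is torsion-free, and $\widetilde\Gamma$ is neat.

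Apart from the (routine) bookkeeping, the crux is this last estimate: the congruence condition forces every eigenvalue of every element of $\widetilde\Gamma$ to be congruent to $1$ modulo $\mathfrak P$, whereas a nontrivial $\ell$-power root of unity is never that close to $1$ at a prime above $\ell$ once $\ell\ge 3$ (the level $\ell=2$ fails already for $-I$). The remaining ingredients --- computation with finitely generated linear groups over rings of $S$-integers, with their images in finite groups, and Reidemeister--Schreier rewriting --- are standard and effective.
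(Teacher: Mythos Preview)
Your argument is correct and follows the same conceptual thread as the paper's proof --- pass to a principal congruence subgroup at a prime $\ell$ and use $\ell$-adic proximity of eigenvalues to $1$ to rule out torsion --- but the implementations differ. The paper first reduces to $\Gamma\subseteq\mathrm{GL}_n(\Z)$, then invokes Krasner (made effective by Pauli--Roblot) to produce a finite extension $\mathbf{k}/\Q_\ell$ over which every degree $n$ polynomial with $\Q_\ell$-coefficients splits, and finally appeals to the structure theorem $\mathbf{k}^*\cong \Z\oplus(\text{finite})\oplus\Z_\ell^N$ to find an $\varepsilon$-ball around $1$ free of torsion; the required congruence level is then ``sufficiently deep,'' left implicit. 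You instead stay over $\cO_{k,S}$, take the \emph{first} congruence subgroup at an unramified prime $\mathfrak p\mid\ell$ with $\ell\ge 3$, and verify neatness by a direct Newton polygon computation on $\det\bigl(TI-(\gamma-I)\bigr)$ together with the exact formula $v(\zeta_{\ell^s}-1)=1/\varphi(\ell^s)$. Your route is more self-contained (no auxiliary splitting field, no external references) and makes the level explicit, including the observation that $\ell=2$ fails; the paper's route is shorter once the cited inputs are granted. One minor point of phrasing: the step ``$\det\gamma\in\cO_{k,S}^{*}$, so each $\lambda_i$ is an $S$-unit'' tacitly uses that the $\lambda_i$ are integral over $\cO_{k,S}$ (roots of a monic polynomial with $\cO_{k,S}$-coefficients), so each $v_{\mathfrak P}(\lambda_i)\ge 0$ and their sum vanishes; this is routine but worth one clause.
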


\begin{proof}
It suffices to establish the result for a discrete subgroup of
$GL_n(\Z)$.
Fix a prime $\ell$.
There is a finite extension $\mathbf{k}$ of $\Q_\ell$
over which every polynomial of degree $n$ with coefficients in $\Q_\ell$
factors completely
(see \cite{krasner},
effectively computed in \cite{pr}).
The structure of $\mathbf{k}^*$ is known as a direct sum of
$\Z$, a finite group, and
$\Z_\ell^N$ for some $N$.
Then there is an
$\varepsilon>0$ such that ball of radius $\varepsilon$ around $1$
is contained in the free part, so an $n\times n$
integer matrix sufficiently
close $\ell$-adically to the identity matrix has its eigenvalues not
more than $\varepsilon$ away from $1$.
Hence they generate a
torsion free subgroup of $\mathbf{k}^*$, and also of $\C^*$.
\end{proof}

We fix an embedding $k\hookrightarrow \C$.

\begin{prop}
\label{gammacover}
Let $\mathbb{D}=G/K$ be a bounded symmetric domain,
$\Gamma$ an arithmetic subgroup of $G$,
and $\widetilde{\Gamma}$ a finite-index subgroup of $\Gamma$ which is neat.
Let $X^\circ$ be a quasiprojective variety over $k$,
given by explicit equations, and let $U\subset X^\circ$ be a nonempty
open subscheme, also explicitly given.
Suppose that there exists an isomorphism
$X^\circ_{\C}\eqto \Gamma\backslash \mathbb{D}$ such that
the map $\mathbb{D}\to \Gamma\backslash \mathbb{D}$ is unramified over
the image of $U_\C$.
Then there is an effective procedure to produce a finite extension $k'$
of $k$ with compatible embedding in $\C$ and a finite collection
of nonsingular quasiprojective varieties $Y^{(1)}$, $\ldots$, $Y^{(m)}$
defined over $k'$ with morphisms
$f_i:Y^{(i)}\to X^\circ_{k'}$, such that, for some $i$,
setting $\widetilde{X}^\circ:=Y^{(i)}$ there exists
an isomorphism
$\widetilde{X}^\circ_{\C}\eqto \widetilde{\Gamma}\backslash \mathbb{D}$
fitting into a commutative diagram
\[
\xymatrix{
\widetilde{X}^\circ_{\C} \ar[r]^\sim \ar[d] &
\widetilde{\Gamma}\backslash \mathbb{D} \ar[d] \\
X^\circ_{\C} \ar[r]^\sim & \Gamma\backslash \mathbb{D}
}
\]
\end{prop}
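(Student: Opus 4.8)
The plan is to reduce the construction of $\widetilde{\Gamma}\backslash\mathbb{D}$ to a covering-space problem to which Lemma \ref{hilbschemeargument} applies. First I would observe that since $\widetilde{\Gamma}$ is neat and of finite index $d:=[\Gamma:\widetilde{\Gamma}]$ in $\Gamma$, the map $\widetilde{\Gamma}\backslash\mathbb{D}\to\Gamma\backslash\mathbb{D}$ is a degree-$d$ covering space in the orbifold sense; more precisely, over the locus where $\Gamma$ acts freely it is a genuine étale cover of degree $d$, and over the branch locus it is a covering of the coarse space which is étale precisely where we have removed the fixed-point images. By hypothesis the branch locus of $\mathbb{D}\to\Gamma\backslash\mathbb{D}$ does not meet the image of $U_\C$, so on $U_\C\cong$ (its image in $\Gamma\backslash\mathbb{D}$) the preimage $\widetilde{\Gamma}\backslash\mathbb{D}\to\Gamma\backslash\mathbb{D}$ restricts to an honest connected étale cover of degree $d$.

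Next I would pass to a normal compactification or at least work with $X^\circ$ as a normal quasiprojective variety (applying effective normalization if the given equations do not already present it as normal) so that Lemma \ref{hilbschemeargument} is applicable with the pair $(X,U)=(X^\circ,U)$ and the integer $d$. That lemma produces, effectively, a finite extension $k'/k$ and a finite list $Y^{(1)},\ldots,Y^{(m)}$ of normal quasiprojective varieties over $k'$ with finite morphisms $f_i\colon Y^{(i)}\to X^\circ_{k'}$ which are étale of degree $d$ over $U_{k'}$, and such that the $Y^{(i)}_{\bar k}$ exhaust, up to isomorphism over $X^\circ_{\bar k}$, all the degree-$d$ coverings by normal varieties that are étale over the preimage of $U_{\bar k}$. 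I would then fix a compatible embedding $k'\hookrightarrow\C$ extending the chosen $k\hookrightarrow\C$, and argue that the normalization of $X^\circ_\C$ in the function field of $\widetilde{\Gamma}\backslash\mathbb{D}$ is a normal quasiprojective variety over $\C$, finite of degree $d$ over $X^\circ_\C$ and étale over the image of $U_\C$ (using that $\widetilde{\Gamma}\backslash\mathbb{D}$ is smooth because $\widetilde{\Gamma}$ is neat, hence torsion-free, so it coincides with that normalization). Therefore $\widetilde{\Gamma}\backslash\mathbb{D}$ appears, as a complex variety, among the $Y^{(i)}_\C$; say it is $Y^{(i_0)}_\C$, and I set $\widetilde{X}^\circ:=Y^{(i_0)}$. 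The commutativity of the displayed square then follows because the isomorphism $\widetilde{X}^\circ_\C\xrightarrow{\sim}\widetilde{\Gamma}\backslash\mathbb{D}$ is, by construction, an isomorphism over $X^\circ_\C\xrightarrow{\sim}\Gamma\backslash\mathbb{D}$: both horizontal maps intertwine $f_{i_0}$ with the natural projection $\widetilde{\Gamma}\backslash\mathbb{D}\to\Gamma\backslash\mathbb{D}$.

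Finally I would note that the $Y^{(i)}$ delivered by Lemma \ref{hilbschemeargument} are normal but possibly singular, whereas the proposition asks for nonsingular varieties; since $\widetilde{\Gamma}\backslash\mathbb{D}$ is nonsingular we may simply discard from the list any $Y^{(i)}$ whose smooth locus is not all of $Y^{(i)}$ — or, more safely, apply effective resolution of singularities to each $Y^{(i)}$ over the complement of $U$ while leaving it untouched over $U$, and re-index — without losing the one that is isomorphic to $\widetilde{\Gamma}\backslash\mathbb{D}$. The main obstacle I anticipate is the identification step: verifying carefully that the algebraic normalization of $X^\circ_\C$ in $\C(\widetilde{\Gamma}\backslash\mathbb{D})$ really is the analytic quotient $\widetilde{\Gamma}\backslash\mathbb{D}$ and that it is étale (not merely unramified in codimension one) over the image of $U_\C$, so that it genuinely falls within the class of coverings enumerated by Lemma \ref{hilbschemeargument}. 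This uses neatness of $\widetilde{\Gamma}$ in an essential way — without torsion-freeness the quotient could acquire quotient singularities even over loci where $\Gamma$ has fixed points but $\widetilde{\Gamma}$ does not — together with the hypothesis that $\mathbb{D}\to\Gamma\backslash\mathbb{D}$ is unramified over the image of $U_\C$ to control the behavior along the preimage of $U$.
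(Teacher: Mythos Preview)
Your proposal is correct and follows essentially the same route as the paper: observe that $\widetilde{\Gamma}\backslash\mathbb{D}\to\Gamma\backslash\mathbb{D}$ is finite of degree $d=[\Gamma:\widetilde{\Gamma}]$ and \'etale over (the image of) $U_\C$ because its branch locus is contained in that of $\mathbb{D}\to\Gamma\backslash\mathbb{D}$, then invoke Lemma~\ref{hilbschemeargument}. The paper's own proof is the one-line version of this; your additional remarks on normality of $X^\circ$, on discarding the singular $Y^{(i)}$ (legitimate since neatness makes $\widetilde{\Gamma}\backslash\mathbb{D}$ smooth), and on the identification via normalization are reasonable elaborations rather than a different argument.
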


\begin{proof}
Since $\widetilde{\Gamma}\backslash \mathbb{D}\to \Gamma\backslash \mathbb{D}$
ramifies over the same set of points as
$\mathbb{D}\to \Gamma\backslash \mathbb{D}$,
this follows directly from Lemma \ref{hilbschemeargument}.
\end{proof}

\begin{prop}
\label{neatbb}
Let $\mathbb{D}=G/K$ be a bounded symmetric domain,
$\Gamma$ a neat arithmetic subgroup of $G$, and
$X^\circ$ a quasiprojective variety over $k$ given by explicit equations
such that $X^\circ_{\C}$ is isomorphic to $\Gamma\backslash \mathbb{D}$.
Assume that $PGL_2$ is not a quotient of $G$.
Then there is an effective procedure to construct
a projective variety $X$ over $k$, together with open immersion
$X^\circ\to X$, such that
$X_{\C}$ is isomorphic to the
Baily-Borel compactification $(\Gamma\backslash \mathbb{D})^*$.
\end{prop}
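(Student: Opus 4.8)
The plan is to realize $X_{\C}$ as the Baily-Borel compactification by exhibiting it as the $\Proj$ of the ring of pluricanonical sections of $X^\circ$, and then to perform that construction effectively over $k$. The hypothesis that $PGL_2$ is not a quotient of $G$ says precisely that $\mathbb{D}$ has no factor isomorphic to the unit disc; hence every proper rational boundary component of $\mathbb{D}$ has codimension $\ge 2$, and so the boundary $(\Gamma\backslash\mathbb{D})^*\smallsetminus\Gamma\backslash\mathbb{D}$ has codimension $\ge 2$ in the normal projective variety $(\Gamma\backslash\mathbb{D})^*$. By the Koecher principle the restriction map $H^0((\Gamma\backslash\mathbb{D})^*,\lambda^{\otimes n})\to H^0(\Gamma\backslash\mathbb{D},\lambda^{\otimes n})$ is then an isomorphism for every $n$, where $\lambda$ is the (ample) automorphic line bundle on $(\Gamma\backslash\mathbb{D})^*$; consequently $(\Gamma\backslash\mathbb{D})^*=\Proj\bigoplus_{n\ge0}H^0(\Gamma\backslash\mathbb{D},\lambda^{\otimes n})$. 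Since $\Gamma$ is neat, $\Gamma\backslash\mathbb{D}$ is smooth, and the description of its cotangent bundle as an automorphic vector bundle identifies $\omega_{\Gamma\backslash\mathbb{D}}$ with a positive power $\lambda^{\otimes q}$, the integer $q$ depending only on the (finitely many possible) types of the irreducible factors of $\mathbb{D}$; passing to the $q$-th Veronese subring therefore gives $(\Gamma\backslash\mathbb{D})^*=\Proj\bigoplus_{m\ge0}H^0(\Gamma\backslash\mathbb{D},\omega_{\Gamma\backslash\mathbb{D}}^{\otimes m})$. (When $\mathbb{D}$ is reducible one first reduces to its irreducible factors by the K\"unneth formula for the pluricanonical ring of a product.) Because $H^0$ of a coherent sheaf and the formation of $\Proj$ both commute with the flat base change $k\hookrightarrow\C$, we may set $R:=\bigoplus_{m\ge0}H^0(X^\circ,\omega_{X^\circ/k}^{\otimes m})$; it is a finitely generated graded $k$-algebra with $R_0=H^0(X^\circ,\cO_{X^\circ})=k$ (finite generation descending from the Baily-Borel theorem over $\C$, the equality $R_0=k$ from the codimension bound), the variety $X:=\Proj R$ is projective and geometrically normal with $X_{\C}\cong(\Gamma\backslash\mathbb{D})^*$, and the natural morphism $X^\circ\to X$ is an open immersion fitting into the evident commutative square, since being a flat monomorphism of finite presentation may be checked after base change to $\C$, where the map is the inclusion $\Gamma\backslash\mathbb{D}\hookrightarrow(\Gamma\backslash\mathbb{D})^*$.

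It remains to make the construction of $R$, and of $X=\Proj R$, effective. First I would apply the effective form of resolution of singularities recalled above to obtain a nonsingular projective variety $\bar X$ over $k$ with an open immersion $X^\circ\hookrightarrow\bar X$ whose complement is a simple normal crossings divisor $D$, and then compute $\omega_{\bar X}$ explicitly as a line bundle on $\bar X$ (by iterated adjunction along the blow-ups produced, or directly as an $\mathcal{E}xt$-sheaf computable from the equations). Then $H^0(X^\circ,\omega_{X^\circ/k}^{\otimes m})=\bigcup_{j\ge0}H^0(\bar X,\,m\omega_{\bar X}+jD)$, an increasing union of finite-dimensional spaces each computable by the standard effective methods for global sections of a line bundle on a projective scheme given by explicit equations; the union stabilizes, its limit being the (finite-dimensional) space of weight-$qm$ modular forms for $\Gamma$. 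Granting effective bounds, for each $m$, on the value of $j$ at which the union stabilizes, and on the degrees of a generating set of $R$ and of its relations, one obtains a finite presentation of $R$ in bounded degree; then $X=\Proj R$ is cut out by explicit equations in a weighted projective space, hence — after re-embedding by a Veronese map, cf.\ Lemma \ref{finitemorlem} — in an ordinary $\PP^M$, and one verifies effectively that $X$ is normal and that $X^\circ\to X$ is an open immersion with complement of codimension $\ge 2$.

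The main obstacle is precisely the effectivity asserted in the previous sentence: turning the qualitative finite generation supplied by the Baily-Borel theorem into explicit degree bounds, i.e.\ an effective Baily-Borel theorem. I expect this to rest on an effective form of the Hirzebruch-Mumford proportionality formula, which makes $\dim H^0(X^\circ,\omega_{X^\circ/k}^{\otimes m})=\dim M_{qm}(\Gamma)$ computable from $(\Gamma,\mathbb{D})$ for every $m$ (for neat $\Gamma$ there are no elliptic contributions, and the boundary strata are governed by modular forms for lower-rank arithmetic groups, which one treats recursively), together with effective Kawamata-Viehweg-type vanishing theorems on the resolution $\bar X$ that bound when the increasing union above stabilizes and when $R$ is generated. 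Everything else — the resolution, the computation of $\omega_{\bar X}$, the cohomology computations, the formation of $\Proj$, the re-embedding, and the verification of normality and of the open immersion — is routine effective commutative algebra.
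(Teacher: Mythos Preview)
Your overall strategy --- identify $X$ with $\Proj$ of the pluricanonical ring of $X^\circ$, pass to a smooth normal-crossing compactification $\bar X$, and compute each graded piece as sections of an explicit line bundle on $\bar X$ --- is exactly the paper's strategy. You also correctly isolate the crux: making the stabilization of the union $\bigcup_j H^0(\bar X,m\omega_{\bar X}+jD)$ and the degree of generation of $R$ effective. However, your proposed route through Hirzebruch--Mumford proportionality and ``effective Kawamata--Viehweg-type vanishing'' is left as an expectation, not an argument, so the proof as written has a gap precisely where it matters.

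The paper closes this gap differently and more directly, using two ingredients you did not invoke. First, Alexeev's theorem (Theorem~\ref{alexeev}) says the pair $(X,\Delta)$ formed by the Baily--Borel compactification and its boundary is log canonical; since here $\Delta$ has codimension $\ge 2$, the discrepancies of the boundary divisors $D_i$ on the resolution $\widetilde X$ satisfy $c_i\le 1$, and one gets at once
\[
H^0(X,dK_X)\;=\;H^0\big(\widetilde X,\,d(K_{\widetilde X}+\textstyle\sum D_i)\big)\;=\;H^0(X^\circ,dK_{X^\circ}),
\]
so your increasing union stabilizes already at $j=m$. Second, rather than bounding generators of the whole ring, the paper invokes Fujino's Angehrn--Siu-type theorem (Theorem~\ref{fujino2}): there is a constant $n$ depending only on $\dim X$ such that $|dK_X|$ separates points for every $d\ge n$. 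One then computes the single graded piece $H^0(\widetilde X,d(K_{\widetilde X}+\sum D_i))$, takes the image of the associated map, and effectively normalizes that image to recover $X$. This sidesteps entirely the need for bounds on generators and relations of $R$, which is what your proposed proportionality-plus-vanishing program would have to supply.
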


The first ingredient in the proof of
Proposition \ref{neatbb}
is a result of Alexeev \cite[\S 3]{alexeev}, building on
earlier work of Mumford \cite{mumford}:
\begin{theo}
\label{alexeev}
Let $\mathbb{D}$ be a bounded Hermitian symmetric domain and $\Gamma$ a neat
arithmetic subgroup acting on $\mathbb{D}$.  Let $X^{\circ}=\Gamma\backslash \mathbb{D}$
with Baily-Borel compactification $X$ and boundary $\Delta$.
Then $(X,\Delta)$ is log canonical, with the automorphic factor
coinciding with the log canonical divisor $K_{X}+\Delta$.
\end{theo}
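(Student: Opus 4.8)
The plan is to reduce the statement to Mumford's analysis of automorphic line bundles on a toroidal compactification, followed by a formal discrepancy computation. Write $n=\dim\mathbb{D}$ and let $\lambda$ denote the automorphic factor, normalized as the line bundle attached to the canonical factor of automorphy, so that $\lambda|_{X^\circ}=K_{X^\circ}$ (a top holomorphic form on $\mathbb{D}$ transforms under $G$ by the Jacobian determinant) and so that $\lambda$ is ample on $X$ by the Baily--Borel construction. First I would invoke the theory of toroidal compactifications of Ash--Mumford--Rapoport--Tai to choose a smooth toroidal compactification $\overline{X}$ of $X^\circ$ with reduced simple normal crossings boundary $\overline{\Delta}$ and a proper birational morphism $\pi\colon\overline{X}\to X$ to the Baily--Borel compactification. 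Neatness of $\Gamma$ is exactly what guarantees that $X^\circ$ is smooth and that $\overline{X}$ may be taken smooth with a \emph{reduced} boundary.

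The key input is Mumford's canonical-extension identity \cite{mumford}: the pullback $\pi^*\lambda$ coincides with the canonical extension of $K_{X^\circ}$ across $\overline{\Delta}$, which for the top automorphic bundle is the log canonical bundle $\Omega^n_{\overline{X}}(\log\overline{\Delta})$, so that
\[
K_{\overline{X}}+\overline{\Delta}=\pi^*\lambda .
\]
I would then descend this to $X$. Since $\pi$ is an isomorphism over $X^\circ$, every $\pi$-exceptional divisor lies over $\Delta$; writing $\overline{\Delta}=\widetilde{\Delta}+E$ with $\widetilde{\Delta}$ the strict transform of $\Delta$ and $E=\sum_j E_j$ the reduced exceptional divisor, and applying $\pi_*$ (using $\pi_*\mathcal{O}_{\overline{X}}=\mathcal{O}_X$ by normality), the projection formula yields $K_X+\Delta=\pi_*(K_{\overline{X}}+\overline{\Delta})=\lambda$. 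In particular $K_X+\Delta$ is $\Q$-Cartier, being equal to the ample class $\lambda$, which already gives the identification of the automorphic factor with $K_X+\Delta$.

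Knowing now that $K_X+\Delta$ is $\Q$-Cartier, I would pull it back and compare with the identity above: $\pi^*(K_X+\Delta)=\pi^*\lambda=K_{\overline{X}}+\widetilde{\Delta}+E$, whence
\[
K_{\overline{X}}+\widetilde{\Delta}=\pi^*(K_X+\Delta)-\sum_j E_j .
\]
Thus the discrepancy of $(X,\Delta)$ along each exceptional divisor $E_j$ equals $-1$. As $\overline{X}$ is smooth and $\overline{\Delta}$ is simple normal crossings, $\pi$ is a log resolution of $(X,\Delta)$, and every discrepancy is exactly $-1\ge-1$; therefore $(X,\Delta)$ is log canonical, completing the proof (cf.\ \cite{alexeev}).

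The main obstacle is the analytic heart of the argument, namely the identity $\pi^*\lambda=\Omega^n_{\overline{X}}(\log\overline{\Delta})$: it requires matching the growth conditions defining automorphic forms on the Baily--Borel compactification with the canonical extension of the underlying variation of Hodge structure across the toroidal boundary, which is precisely the content of Mumford's proportionality theorem \cite{mumford} together with the boundary calculations of Ash--Mumford--Rapoport--Tai. Once that identity is granted, both the descent of $K_X+\Delta=\lambda$ and the discrepancy computation establishing log canonicity are formal birational geometry.
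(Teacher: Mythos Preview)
The paper does not give a proof of this statement; it is quoted verbatim as a result of Alexeev \cite[\S3]{alexeev}, building on Mumford \cite{mumford}, and is used as a black box in the proof of Proposition~\ref{neatbb}. Your sketch is essentially the argument of those references: Mumford's identification of the canonical extension of the automorphic bundle on a smooth toroidal compactification with $K_{\overline{X}}+\overline{\Delta}$, pushed forward to $X$ to obtain $K_X+\Delta=\lambda$, and then the tautological discrepancy computation showing all discrepancies equal $-1$. So you have reconstructed, correctly, the proof in the cited source rather than diverged from anything in the present paper.

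One small remark. In the application the paper makes (Proposition~\ref{neatbb}) the hypothesis that $PGL_2$ is not a quotient of $G$ forces the Baily--Borel boundary to have codimension at least $2$, so $\Delta=0$ as a divisor and the entire toroidal boundary $\overline{\Delta}$ is $\pi$-exceptional; your decomposition $\overline{\Delta}=\widetilde{\Delta}+E$ then degenerates to $\widetilde{\Delta}=0$, and the argument goes through unchanged. Your write-up already accommodates this since you never assume $\widetilde{\Delta}\neq 0$, but it is worth noting that the general formulation with nonzero $\Delta$ is not actually needed downstream in the paper.
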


We will also use a result of Fujino \cite{fujino}:
\begin{theo}
\label{fujino2}
Let $(X,\Delta)$ be a projective log canonical pair and $M$ a line bundle on $X$.
Assume that $M\equiv K_X+\Delta+N$, where $N$ is an ample $\Q$-divisor on $X$.
Let $x_1,x_2 \in X$ be closed points and assume there there are positive numbers $c(k)$
with the following properties:
\begin{enumerate}
\item{If $Z\subset X$ is an irreducible (positive-dimensional) subvariety which contains
$x_1$ or $x_2$ then
$$(N^{\dim(Z)}\cdot Z) > c(\dim(Z))^{\dim(Z)}.$$ }
\item{The numbers $c(k)$ satisfy the inequality
$$\sum_{k=1}^{\dim(X)} \sqrt[k]{2} \frac{k}{c(k)} \le 1.$$
}
\end{enumerate}
Then the global sections of $M$ separate $x_1$ and $x_2$.
\end{theo}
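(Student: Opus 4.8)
The plan is to deduce the separation of $x_1$ and $x_2$ from a Nadel-type vanishing theorem, after manufacturing an auxiliary effective divisor that concentrates singularity at exactly the two points. Concretely, I would seek, for a small $\epsilon>0$, an effective $\Q$-divisor $D\equiv(1-\epsilon)N$ such that the pair $(X,\Delta+D)$ is log canonical in a neighborhood of $\{x_1,x_2\}$ and has, near these points, non-klt locus equal to the reduced zero-dimensional scheme $\{x_1,x_2\}$. Writing $\mathcal{J}$ for the associated non-lc (adjoint) ideal sheaf in the sense of Fujino, the hypothesis $M\equiv K_X+\Delta+N$ gives $M-(K_X+\Delta+D)\equiv N-D\equiv\epsilon N$, which is ample; this puts us in the range of Fujino's generalization of the Nadel vanishing theorem to log canonical pairs, yielding $H^1(X,\mathcal{J}\otimes M)=0$. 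The cohomology sequence of $0\to\mathcal{J}\otimes M\to M\to M\otimes(\mathcal{O}_X/\mathcal{J})\to 0$ then forces $H^0(X,M)\to M\otimes(\mathcal{O}_X/\mathcal{J})$ to be surjective; since $\mathcal{O}_X/\mathcal{J}$ is supported on $\{x_1,x_2\}$ and there cuts out the reduced points, this is precisely the assertion that global sections of $M$ separate $x_1$ and $x_2$.

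Everything then rests on the construction of $D$, which I would carry out by the inductive ``creation of singularities'' technique of Angehrn--Siu, adapted to the log canonical setting. The induction proceeds by descending dimension of the minimal non-klt center $Z$ passing through one of the points. At the initial step $Z=X$: the strict inequality $(N^{\dim(X)})>c(\dim(X))^{\dim(X)}$ of hypothesis~(1) feeds an asymptotic Riemann--Roch estimate to produce, for $m\gg0$, a divisor in $|mN|$ of multiplicity exceeding $\dim(X)$ at both $x_1$ and $x_2$ simultaneously; a suitable $\Q$-multiple makes the pair non-klt along a center of strictly smaller dimension through each point. At a general step one restricts to the current $j$-dimensional center $Z$, where hypothesis~(1) supplies $(N^{j}\cdot Z)>c(j)^{j}$, and the same concentration argument on $Z$ yields an effective divisor dropping the dimension of the center further while adding only a controlled amount to the coefficients. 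The total $\Q$-multiple of $N$ expended over all steps is governed by the series $\sum_{k}\tfrac{k}{c(k)}$; the factor $\sqrt[k]{2}$ in hypothesis~(2) is exactly the overhead of forcing the required singularity at the \emph{two} points at once, and condition~(2) is the statement that this total stays within the available budget, leaving the cushion $\epsilon N$ for the vanishing step.

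First I would complete the descent to a zero-dimensional center and then apply a tie-breaking step to guarantee that the minimal non-klt center has become precisely the reduced pair of points, and not some larger or embedded locus: adding a small general member of a multiple of $N$ and renormalizing the coefficient so that the log canonical threshold is attained exactly along $\{x_1,x_2\}$ isolates these centers while preserving log canonicity in a neighborhood, which is what the conclusion needs.

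The hard part will be running the inductive step honestly in the log canonical rather than the klt category. For klt pairs the minimal non-klt center is normal and adjunction is transparent, so the Angehrn--Siu induction applies directly; for merely log canonical $(X,\Delta)$ one must work with lc centers that may be singular or non-normal, control the restricted pair on $Z$ and the behavior of $N|_Z$ under adjunction, and verify that Fujino's vanishing theorem continues to apply to the non-lc ideal sheaf at each stage. Ensuring that the concentration and Riemann--Roch estimates survive restriction to singular centers, and that the singularities stay isolated at the two points throughout, is the main obstacle.
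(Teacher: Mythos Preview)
The paper does not prove this statement at all: it is stated as ``a result of Fujino'' with a citation to \cite{fujino}, and is then used as a black box in the proof of Proposition~\ref{neatbb}. There is nothing to compare your argument against in the paper itself.

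What you have written is a faithful outline of Fujino's own proof in the cited reference, which is exactly the Angehrn--Siu concentration-of-singularities method adapted to the log canonical setting via his non-lc ideal sheaves and the associated vanishing theorem. Your identification of the core steps (manufacturing $D\equiv(1-\epsilon)N$, the inductive cutting down of minimal non-klt centers using the intersection-number hypotheses, the $\sqrt[k]{2}$ overhead for two points, tie-breaking, and the final vanishing argument) matches Fujino's argument, and you have correctly flagged the genuine technical content, namely making the induction step work when centers need not be normal and adjunction must be handled with care. If your goal is to supply a self-contained proof, you would need to fill in precisely those points by invoking Fujino's extension and vanishing theorems for lc pairs; for the purposes of this paper, simply citing \cite{fujino} is what the authors do and is sufficient.
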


\begin{proof}[Proof of Proposition \ref{neatbb}]
The hypotheses guarantee that the complement of $\Gamma\backslash \mathbb{D}$
in the Baily-Borel compactification has codimension $\ge 2$.
If we define
\[X=\Proj\Big(\bigoplus_{d\ge 0} H^0(X^\circ, dK_{X^\circ})\Big).\]
then $X$ satisfies the conditions of the proposition.
It remains to show that we can construct $X$ effectively.

Effective resolution of singularities as in Section \ref{sec:effalggeo} allows us
to construct a nonsingular compactification $\widetilde{X}$ of $X^\circ$,
projective, such that $\widetilde{X}\smallsetminus X^\circ$ is a
simple normal crossings divisor
$D_1\cup\cdots\cup D_m$.

Now the Borel extension property \cite[Thm.\ A]{borel} implies that
the inclusion $X^\circ\to X$ extends to a
birational morphism $\pi:\widetilde{X}\to X$.
By Theorem \ref{alexeev}, $X$ has at worst log canonical singularities.
Hence there are integers $c_i\le 1$ such that
\[\pi^*K_X=K_{\widetilde{X}}+\sum c_iD_i.\]
By the chain of inclusions
\[H^0(\widetilde{X},d(\pi^*K_X))
\subseteq
H^0(\widetilde{X},d(K_{\widetilde{X}}+\sum D_i))
\subseteq
H^0(X^\circ,dK_{X^\circ})\]
we deduce that
\begin{equation}
\label{KplussumD}
H^0(X,dK_X)=H^0(\widetilde{X},d(K_{\widetilde{X}}+\sum D_i)).
\end{equation}

Theorem \ref{fujino2} supplies a universal constant $n$ depending only on
the dimension of $X$, such that for any $d\ge n$, the linear system
$|dK_X|$ separates points on $X$.
The image $X'$ of $|dK_X|$ may be effectively computed using
\eqref{KplussumD}.
The normalization of $X'$, which may also be computed effectively,
is then isomorphic to $X$.
\end{proof}

\begin{rema}
\label{whenknowDmodGamma}
There are examples in the literature in which $X^\circ$ as in
Proposition \ref{gammacover} has been constructed.
\begin{itemize}
\item Abelian varieties
\begin{itemize}
\item Polarized, with
symmetric theta structure \cite{mumfordequationsiii}, \cite{mumfordtataiii}.
\item Polarized, with level $n$ structure ($n\ge 3$):
a construction based on Hilbert scheme and geometric invariant
theory, presented in \cite[\S7.3]{git}, can be carried out effectively
by the techniques mentioned in Section \ref{sec:effalggeo}.
\end{itemize}
\item $K3$ surfaces
\begin{itemize}
\item
A six-dimensional ball quotient as a moduli space of
$K3$ surfaces which are cyclic degree $4$ covers of $\PP^2$ ramified
along a quartic \cite{artebani}, \cite{kondocrelle}.
\item
A nine-dimensional ball quotient coming from $K3$ surfaces which are
cyclic triple covers of $\PP^1\times \PP^1$ with branch curve
of bidegree $(3,3)$ \cite{kondo}.
\item
$K3$ surfaces of degree $2$, described by Horikawa \cite{horikawa} and
Shah \cite{shahdegree2}; see also \cite{looijengavancouver}.
		
\end{itemize}
\item
A four-, resp.\ ten-dimensional ball quotient arising
from the moduli space of cubic surfaces \cite{ACT1},
resp.\ threefolds \cite{ACT2}.
\end{itemize}
Mumford's construction in the abelian variety setting yields
explicit equations for the moduli space together with a universal family.
In each of the other examples, explicit GIT constructions are given,
and from these we may obtain explicit equations as mentioned in
Section \ref{sec:effalggeo}.
This allows us,
e.g., to compute the point in $X^\circ$ corresponding to
a given $K3$ surface of degree $2$ presented as a double cover of
the plane branched along an explicitly given sextic curve.
\end{rema}

\begin{rema}
$K3$ surfaces of degree $4$ are analyzed in
\cite{looijengaamsterdam} and \cite{shahdegree4},
and degrees up to $8$ in \cite{looijengaduke}, via the GIT of
quartic surfaces, respectively complete intersections.
The analysis yields a factorization of the rational map from the
Baily-Borel compactification to the GIT quotient.
It would be interesting to use this to
give an effective construction of $X^\circ$ for these degrees
generalizing the one for degree $2$, which is based on
an explicit weighted Kirwan blowup
of the GIT quotient of plane sextics \cite{kirwanlee}.
\end{rema}

\begin{rema}
\label{igusa}
An effective construction of a Baily-Borel compactification is
tantamount to effectively bounding degrees of generators of the corresponding
ring of automorphic functions.
The technique in \cite{bb}
for proving the \emph{existence} of projective compactifications is
\emph{not effective} as it relies on a compactness argument.
In some examples these rings have been computed explicitly:
Igusa \cite{Ig62} shows that the ring of Siegel modular forms
for principally polarized abelian surfaces are generated
by Eisenstein series of weights $4,6,10$, and $12$.
The case of threefolds is explored by Tsuyumine \cite{tsu}, who shows
that $34$ modular forms, of weights ranging from $4$ to $48$, suffice.
Not all of these may be expressed in terms of Eisenstein series.
The case of fourfolds is addressed in Freitag-Oura \cite{FO}, who
introduce some specific relations and dimension formula.
Additional work in this direction was done by Oura-Poor-Yuen
\cite{opy}.
\end{rema}

\begin{prop}
\label{rigidity}
Let $k$ be a number field with a given embedding in $\C$.
Let $X$ and $X'$ be projective varieties over $k$
satisfying $X_{\C}\cong (\Gamma\backslash \mathbb{D})^*$ and
$X'_{\C}\cong (\Gamma'\backslash \mathbb{D}')^*$,
and suppose that $\Gamma'$ is neat.
Fix an integer $d$.
Then there is an effective procedure to produce a finite extension $k'$ of
$k$ with compatible embedding in $\C$ and morphisms
$f_1$, $\ldots$, $f_m:X_{k'}\to X'_{k'}$ such that
$(f_1)_{\C}$, $\ldots$, $(f_m)_{\C}$ are all the morphisms
$X_{\C}\to X'_{\C}$ under which the pullback of $K_{X'_{\C}}$ is
isomorphic to $dK_{X_{\C}}$.
\end{prop}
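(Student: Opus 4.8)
The plan is to parametrize morphisms $X_{\C}\to X'_{\C}$ pulling back $K_{X'_\C}$ to $dK_{X_\C}$ by their graphs, viewed as closed subschemes of $X\times X'$, and then to realize these graphs as points of a Hilbert scheme that can be computed effectively. First I would fix $n$ as in Theorem \ref{fujino2} (applied via Proposition \ref{neatbb}), so that $nK_{X'_\C}$ is very ample on $X'_\C$; by the effective construction underlying Proposition \ref{neatbb} we may assume $X'$ is explicitly given so that $\cO_{X'}(1)=nK_{X'}$, and similarly $\cO_X(1)=nK_X$. A morphism $\varphi:X_\C\to X'_\C$ with $\varphi^*K_{X'_\C}\cong dK_{X_\C}$ then satisfies $\varphi^*\cO_{X'}(1)\cong\cO_X(d)$, so its graph $\Gamma_\varphi\subset X\times X'$ has a Hilbert polynomial with respect to $\cO_X(1)\boxtimes\cO_{X'}(1)$ that is computable from $d$, $n$, $\dim X$ and the intersection numbers of $K_X$ (the latter being effectively computable from the explicit equations of $X$). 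The relevant component(s) of the Hilbert scheme $\mathrm{Hilb}(X\times X')$ can thus be written down with explicit equations by standard effective Hilbert scheme techniques.

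The next step is to cut out, inside this Hilbert scheme, the locus of subschemes that actually are graphs of morphisms: a closed subscheme $Z\subset X\times X'$ is the graph of a morphism $X\to X'$ exactly when the first projection $Z\to X$ is an isomorphism, which is a constructible (in fact locally closed, and on the Hilbert scheme closed) condition that can be tested effectively by computing the projection and checking that it is finite of degree one and unramified. Imposing it gives a scheme $H$ of finite type over $k$ parametrizing all morphisms $X\to X'$ with the prescribed numerical behaviour. The key finiteness input is rigidity: since $\Gamma'$ is neat, $X'_\C=(\Gamma'\backslash\mathbb{D}')^*$ is of log general type (its boundary complement is $\Gamma'\backslash\mathbb{D}'$, which has nonnegative — in fact ample, away from the boundary — canonical bundle, and the Baily–Borel compactification is canonically polarized on a big open set), so by a theorem of the type "finitely many morphisms to a variety of general type" the set of such morphisms is finite; hence $H$ is a finite $k$-scheme. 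Taking a finite extension $k'/k$ over which $H$ splits into $k'$-points, and reading off from each point the corresponding graph and thus the morphism $f_i:X_{k'}\to X'_{k'}$, yields the desired list; compatibility of the embedding $k'\hookrightarrow\C$ is arranged as in Proposition \ref{gammacover}.

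The main obstacle is ensuring that the numerical constraint $\varphi^*\cO_{X'}(1)\cong\cO_X(d)$ genuinely pins down finitely many components of the Hilbert scheme and that the "is a graph" locus together with rigidity forces $H$ to be zero-dimensional — in other words, that no positive-dimensional families of such morphisms occur. For this one uses that $X'_\C$, being a neat Baily–Borel compactification, has no nontrivial deformations of maps into it fixing the numerical class: infinitesimal deformations of $\varphi$ are governed by $H^0(X_\C,\varphi^*T_{X'_\C})$, and $T_{X'_\C}$ twisted down by a multiple of the canonical class has no sections, so $H^0(X_\C,\varphi^*T_{X'_\C})=0$; this rigidity (which is exactly the phenomenon alluded to in the introduction's remark that "rigidity allows us to construct the Kuga-Satake morphism \ldots algebraically") both guarantees $H$ is finite and makes the whole computation effective. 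The remaining steps — choosing $n$, computing Hilbert polynomials, constructing the Hilbert scheme, imposing the graph condition, and splitting over $k'$ — are all instances of the effectivity principles collected in Section \ref{sec:effalggeo}.
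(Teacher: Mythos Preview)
Your approach is essentially the paper's: construct the relevant Hom scheme as a locus in a Hilbert scheme (effectively), observe that rigidity forces it to be zero-dimensional, and split over a finite extension. The paper's entire proof is two lines, citing Mok's metric rigidity theorems \cite{mok} for the zero-dimensionality and otherwise appealing to effective Hilbert-scheme construction.

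Where your write-up is weaker than the paper is precisely the rigidity step. The ``finitely many maps to a variety of general type'' slogan you invoke is a Kobayashi--Ochiai style statement about \emph{dominant} (or at least surjective) maps; in the intended application (the Kuga--Satake morphism into a moduli space of abelian varieties) the map is far from dominant, so that argument does not apply as stated. Your infinitesimal argument via $H^0(X_\C,\varphi^*T_{X'_\C})=0$ is also not on firm ground: $X'_\C$ is a Baily--Borel compactification and is typically singular, so $T_{X'_\C}$ is not a bundle and the claimed vanishing is not justified by ``twisting down by a multiple of the canonical class.'' What actually makes the Hom scheme discrete is a genuine hyperbolicity/negative-curvature input specific to quotients of bounded symmetric domains by neat arithmetic groups, and this is exactly what Mok's theorems supply. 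Once you replace your ad hoc rigidity sketch by a citation to \cite{mok} (or an equivalent hyperbolicity argument handling the singular boundary), the rest of your outline --- computing the Hilbert polynomial of the graph from $d$ and the Hilbert polynomial of $X$, cutting out the graph locus, and descending to $k'$ --- is correct and matches the paper.
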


\begin{proof}
The Hilbert scheme representing morphisms of the given degree from
$X$ to $X'$ may be constructed effectively and by
rigidity (\cite{mok}) has dimension zero.
\end{proof}

\section{Abelian varieties}
\label{sec:av}

Let $A$ be an abelian variety over a number field $k$.

\begin{lemm}
\label{lem:semistable}
There is an effective way to produce a finite extension $k'$ of $k$ such that
$A$ acquires semistable reduction after base change to $k'$.
\end{lemm}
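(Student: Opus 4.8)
The plan is to combine Grothendieck's semistable reduction theorem with its $\ell$-adic monodromy criterion, in the quantitative form asserting that $A$ acquires semistable reduction over the extension of $k$ obtained by adjoining enough torsion points; since $A$ is presented by explicit equations (together with its group law), that extension can be written down effectively.

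Recall the relevant input. Over a field $K$ complete with respect to a discrete valuation of residue characteristic $p$, an abelian variety has semistable reduction if and only if, for one (hence every) prime $\ell\ne p$, the inertia group acts unipotently on the Tate module; moreover the inertia always acts quasi-unipotently (Grothendieck's local monodromy theorem, SGA 7). Consequently, if $\ell^n\ge 3$, $\ell\ne p$, and inertia acts trivially on $A[\ell^n]$---in particular if $A[\ell^n]$ is pointwise $K$-rational---then the eigenvalues of every inertia element are roots of unity congruent to $1$ modulo a prime above $\ell$, hence equal to $1$, so inertia acts unipotently on the Tate module and $A$ has semistable reduction over $K$. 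Choosing $m=12$, at every prime $p$ one of the prime powers $4$ and $3$ dividing $m$ is at once $\ge 3$ and prime to $p$; thus if $A[12]$ is pointwise rational over $K$ then $A$ has semistable reduction over $K$, whatever the residue characteristic.

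First I would compute the $m$-torsion for $m=12$. Multiplication by $m$ on $A$ is an explicit morphism, so $A[m]=\ker([m]\colon A\to A)$ is a finite $k$-subscheme $\Spec B$ with $B$ an explicitly computable finite (and, since $\chara k=0$, \'etale) $k$-algebra. Let $k'\subseteq\bar k$ be the compositum of the residue fields of the points of $B$; this is effectively computable by factoring polynomials over number fields, the extension $k'/k$ is finite (and Galois, since $A[m]$ is a $k$-group scheme), and every point of $A[m]$ is $k'$-rational. Applying the criterion above place by place to $A_{k'}$ shows that $A_{k'}$ has semistable reduction at every finite place of $k'$; the archimedean places impose no condition. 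This yields the required $k'$.

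The only substantive ingredient is the monodromy input from SGA 7 (semistable reduction $\iff$ unipotent inertia, together with quasi-unipotence of inertia), which is classical; everything else is routine effective commutative algebra. The single point that needs a little care is to use one modulus $m$ valid simultaneously at all residue characteristics---hence $m=12$ rather than $m=3$; alternatively one could first compute the finite set of primes of bad reduction of $A$ and then adjoin $A[\ell]$ for a single prime $\ell\ge 3$ avoiding their residue characteristics, which is equally effective.
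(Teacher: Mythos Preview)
Your argument is correct and is essentially the same approach as the paper's: both reduce to Grothendieck's criterion in SGA~7~IX, Proposition~4.7 (semistable reduction follows once the $m$-torsion is rational for some $m\ge 3$ prime to the residue characteristic). The paper simply cites that proposition, while you have helpfully made the effectivity explicit by choosing $m=12$ so that a single torsion computation works at all places simultaneously; this is exactly the standard way to unpack the citation.
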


\begin{proof}
This is done by Proposition 4.7 of \cite{SGA7IX}.
\end{proof}

We recall two notions of heights of abelian varieties.
The Faltings height is computed using a semistable model.
Let $K$ be a finite extension of $k$ and
$\mathcal{A}$ a semi-stable model over $\mathfrak{o}_K$.
Then the Faltings height is the arithmetic degree of a particular
metrized canonical sheaf on $\mathcal{A}$
\[h_F(A)=\frac{1}{[K:\Q]}\widehat{\deg} \overline{\omega}_{\mathcal{A}/\Spec(\mathfrak{o}_K)}.\]
This is idependent of the choices of $K$ and $\mathcal{A}$.
For details see, e.g., \cite{bost} \S 2.1.3.

Alternatively, the theta height is defined purely algebraically, in terms
of a principal polarization.
In the following two effectivity results,
the complexity is bounded explicitly in terms of
$[k:\Q]$, $\dim A$, and $h_F(A)$.
Effective comparison results between $h_F(A)$ and the theta height are well known;
see, e.g., \cite{pazuki}.

\begin{prop}
\label{endav}
Let $A$ be a polarized abelian variety over $k$ defined by explicit equations.
Then there is an effective procedure to compute:
\begin{itemize}
\item
A finite extension $k'$ of $k$ for which we have
\[\End_{k'} (A) = \End_{\bar k} (A);\]
\item
Generators of the $\Z$-module $\End_{k'} (A)$;
\item
Generators of the group $\mathrm{NS}(A_{k'})=\mathrm{NS}(\overline{A})$
\end{itemize}
\end{prop}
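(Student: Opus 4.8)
The plan is to combine the effective Faltings results of Masser--W\"ustholz with the algebraic nature of the three objects being computed. First I would invoke Lemma \ref{lem:semistable} to replace $k$ by a finite extension over which $A$ has semistable reduction, and then compute the Faltings height $h_F(A)$ from an explicit semistable model (using the comparison with the theta height referenced via \cite{pazuki}, since $A$ is given by explicit equations tied to a principal polarization). With $[k:\Q]$, $\dim A$, and $h_F(A)$ in hand, the effective isogeny theorem \cite{mw3} bounds the degree of any $k$-simple isogeny factor and, together with the effective version of the Tate conjecture / endomorphism estimates \cite{mw1}, \cite{mw2}, supplies an explicit bound $B$ on the minimal degree of a polarization on each isogeny factor and on the index $[\End_{\bar k}(A):\Z[\text{effectively found subring}]]$. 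The key point is that all of $\End_{\bar k}(A)$, the field of definition of its elements, and $\mathrm{NS}(A_{\bar k})$ are detected after passing to a finite extension whose degree is bounded in terms of $B$ alone: an endomorphism defined over $\bar k$ is defined over the field generated by the coordinates of the graph of a bounded-degree correspondence, and the Galois action on the (bounded) set of such correspondences factors through a finite quotient of explicitly bounded order.

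Concretely, I would proceed as follows. (i) Using the height bound, apply \cite{mw3}, \cite{mw4} to produce an explicit integer $d_0$ such that every endomorphism of $A_{\bar k}$ is represented by the class of a subvariety of $A\times A$ of degree $\le d_0$ with respect to the given polarization; the corresponding Hilbert scheme is of bounded complexity and may be searched effectively. (ii) For each component found, test (effectively, by polynomial identities) whether it is the graph of an endomorphism; the finitely many that pass the test, defined over the compositum of their (bounded-degree) residue fields, generate a $\Z$-module, and \cite{mw1} certifies that once we have collected all such components up to degree $d_0$ this module is all of $\End_{\bar k}(A)$. (iii) Set $k'$ to be this compositum; then $\End_{k'}(A)=\End_{\bar k}(A)$ by construction. (iv) For $\mathrm{NS}$, use that $\mathrm{NS}(A_{\bar k})$ embeds into the symmetric part of $\End_{\bar k}(A^\vee)\otimes\Q$ via the Rosati-type correspondence attached to the polarization; having $\End_{k'}(A)$ and the polarization explicitly, one computes this $\Z$-lattice of symmetric endomorphisms and intersects with the image of $\mathrm{NS}$, which is again cut out by effectively checkable positivity/integrality conditions (line bundles of bounded degree, searched on a bounded-complexity component of the Picard scheme).

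The main obstacle is step (i): turning the \emph{qualitative} statement "every endomorphism is represented by a cycle of bounded degree" into an \emph{explicit} degree bound $d_0$ and, simultaneously, certifying completeness --- i.e.\ knowing when to \emph{stop} searching and declare that the endomorphisms found so far already span $\End_{\bar k}(A)$. This is exactly where the effective Faltings machinery \cite{mw1}, \cite{mw2}, \cite{mw3}, \cite{mw4}, \cite{mw5} (and the height comparisons of \cite{bost}, \cite{pazuki}) does the essential work; everything downstream --- extracting generators, pinning down $k'$, and passing to $\mathrm{NS}$ --- is then a matter of bounded Hilbert/Picard scheme computations together with the effective linear algebra of lattices, all of which is routine given the effective tools assembled in Section \ref{sec:effalggeo}.
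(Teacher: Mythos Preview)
Your proposal is correct and rests on the same Masser--W\"ustholz input as the paper, but the organization differs in a way worth noting. The paper first pins down $k'$ \emph{before} computing the endomorphisms: it quotes an effective bound on $[k':k]$ from \cite[Lemma~2.1]{mw1} and then invokes Ribet's theorem \cite{ribet} that the minimal such $k'$ is unramified over $k$ (once $A$ has semistable reduction), so Hermite--Minkowski leaves only finitely many candidate $k'$. With $k'$ in hand, the paper uses the discriminant bound of \cite{mw3} (equivalently \cite{bost}) on $\End(A)$, which by positive-definiteness of the Rosati trace form directly bounds the degrees of a $\Z$-basis, and then searches. You instead reverse the order: bound the degrees first, run the Hilbert-scheme search for graphs over $\bar k$, and declare $k'$ to be the compositum of the residue fields of the components you find. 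Your route avoids the appeal to Ribet at the cost of a slightly larger (but still effective) $k'$; the paper's route gives a sharper field but needs the extra arithmetic input to make the set of candidate $k'$ finite. For $\mathrm{NS}$ both arguments coincide: identify $\mathrm{NS}(A)\otimes\Q$ with the Rosati-symmetric endomorphisms (the paper cites \cite[Lemma~2.3]{mw1} and \cite[\S5.2]{bl}) and pull back the given polarization by the computed endomorphisms to get rational generators, then saturate.
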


\begin{proof}
First we reduce to the case when $A$ has a semi-stable model over $k$
by effective semi-stable reduction (Lemma \ref{lem:semistable}).
There is an effective bound on $[k':k]$ from Lemma 2.1 of \cite{mw1}.
The minimal $k'$ is unramified over $k$ by Theorem 1.3 of \cite{ribet}.

The main result of \cite{mw3}
(see also \cite{bost})
bounds the discriminant of the ring of
endomorphisms, which by positive-definiteness bounds the degree of
the elements in a $\Z$-basis.
So they can be found effectively.

See Lemma 2.3 of \cite{mw1}, also 5.2 of \cite{bl}, for the
(standard) identification of $\mathrm{NS}(A)\otimes\Q$
with $\End^{sym}(A)\otimes\Q$.
Pulling back the given polarization by these endomorphisms we get
generators of $\mathrm{NS}(A_{k'})\otimes\Q$.
Possibly after a further field extension, we can find representatives of
generators of $\mathrm{NS}(A_{k'})$.
\end{proof}

\begin{prop}[\cite{mw5}, Theorem 1]
\label{endmodl}
Let $A$ be an abelian variety over $k$.
Then there exists an effective $M\in\N$ such that for any $m$,
\[\End(A)\to \End_\Gamma(A_m)\]
has cokernel annihilated by $M$.
In particular, for a prime $\ell \nmid M$ the natural homomorphism
\[\End(A)/\ell\to \End_\Gamma(A_\ell)\]
is an isomorphism.
\end{prop}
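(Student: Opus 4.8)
The plan is to deduce this from the effective isogeny theorem of Masser and W\"ustholz \cite{mw3}, arguing as in Tate's proof of the Tate conjecture but keeping all bounds explicit --- in essence reconstructing the argument of \cite{mw5}. Write $\Gamma=\Gal(\bar k/k)$. First I would reduce to a single prime power: by the Chinese remainder theorem $A_m=\bigoplus_\ell A_{\ell^{v_\ell(m)}}$, and since $\Hom(A_{p^a},A_{q^b})=0$ for distinct primes $p\ne q$, likewise $\End_\Gamma(A_m)=\bigoplus_\ell\End_\Gamma(A_{\ell^{v_\ell(m)}})$; so it is enough to find an effective $M$, independent of the prime $\ell$ and of $n\ge1$, annihilating the cokernel of $\End(A)\to\End_\Gamma(A_{\ell^n})$. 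Next I would replace $k$ by a finite extension $k'$ of effectively bounded degree (Lemma \ref{lem:semistable}, Proposition \ref{endav}, and Zarhin's trick) over which $A$ has semistable reduction, is principally polarized, and satisfies $\End(A)=\End_{\bar k}(A)$; a bound valid over $k'$ gives one over $k$ after multiplication by $[k':k]$, because for $\phi\in\End_\Gamma(A_{\ell^n})$, if an integer multiple $N\phi$ is the reduction of some $\psi\in\End_{k'}(A)$, then $\sum_{\sigma\in\Gal(k'/k)}\sigma\psi\in\End_k(A)$ reduces to $[k':k]N\phi$ (here one uses that $\ker(\End_{\bar k}(A)\to\End(A_{\ell^n}))=\ell^n\End_{\bar k}(A)$). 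Height comparisons \cite{pazuki} make the constants explicit in $[k:\Q]$, $\dim A$ and $h_F(A)$. From now on I assume $A$ has the stated properties over what I continue to call $k$, and I reinstate the factor $[k':k]$ only at the end.

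The heart of the matter is the following. Given $\bar\phi\in\End_\Gamma(A_{\ell^n})$, I form its graph $G\subset(A\times A)[\ell^n]$, a $\Gamma$-stable subgroup of order $\ell^{2n\dim A}$, and put $B:=(A\times A)/G$ --- an abelian variety over $k$ with quotient isogeny $\pi:A\times A\to B$ of degree $\ell^{2n\dim A}$. The key point is that $B$ lies in the \emph{fixed} isogeny class of $A\times A$, so the effective isogeny theorem \cite{mw3} furnishes an isogeny between $A\times A$ and $B$ of degree at most a constant $c=c([k:\Q],\dim A,h_F(A))$ depending on nothing else --- in particular not on $\ell$ or $n$. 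Combining this bounded-degree isogeny with $\pi$ through the standard mechanism of Tate's argument (the explicit bound of \cite{mw3} replacing the qualitative finiteness of an isogeny class that underlies Faltings' proof \cite{faltings}), one produces an element of $\End(A)$ reducing modulo $\ell^n$ to $\bar\phi$ up to a denominator dividing $c$; equivalently, $c$ annihilates the cokernel of $\End(A)\to\End_\Gamma(A_{\ell^n})$ for all $\ell$ and $n$. Reinstating the earlier factor, $M:=[k':k]\,c$ works for all $\ell$, $n$, hence for all $m$.

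For the last assertion I would take $m=\ell$ with $\ell\nmid M$: surjectivity of $\End(A)/\ell\to\End_\Gamma(A_\ell)$ is then immediate because $M$ annihilates the cokernel, and injectivity holds for every $\ell$ since the kernel of $\End(A)\to\End(A_\ell)$ equals $\ell\End(A)$. The step I expect to be the main obstacle is securing the uniformity in $\ell$ and $n$: the \emph{obvious} isogeny attached to $\bar\phi$ has degree $\ell^{2n\dim A}$ and is worthless, and what rescues the argument is precisely that \cite{mw3} supplies a far smaller isogeny \emph{inside the fixed class} of $A\times A$; the reduction to a principally polarized $A$ via Zarhin's trick is exactly what is needed to bring \cite{mw3} to bear. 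Checking that each reduction above --- the field extension, the semistable model, the height comparisons, the averaging --- is effective is routine given Sections \ref{sec:effalggeo} and \ref{sec:av}, but must be carried out to certify that $M$ is genuinely computable.
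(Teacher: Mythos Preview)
The paper does not supply its own proof of this proposition: it is stated with the attribution ``\cite{mw5}, Theorem 1'' in the header and is simply quoted as a black box, with no accompanying proof environment. So there is nothing in the paper to compare your argument against line by line.

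That said, your sketch is a faithful outline of the Masser--W\"ustholz strategy: reduce to prime powers, pass to a controlled finite extension, apply Zarhin's trick to obtain a principal polarization, then for each $\bar\phi\in\End_\Gamma(A_{\ell^n})$ form the graph quotient and invoke the effective isogeny bound to run Tate's lifting argument with a denominator independent of $\ell$ and $n$. Two small points worth tightening if you write this out in full: (i) the effective \emph{isogeny} estimate you need is the main theorem of \cite{mw2}, not \cite{mw3} (the latter bounds the discriminant of the endomorphism ring, which is what Proposition~\ref{endav} uses); (ii) Zarhin's trick replaces $A$ by $(A\times A^\vee)^4$, so you should say explicitly that the constants now depend on $\dim A$ through $8\dim A$ and that an endomorphism of $A_{\ell^n}$ embeds diagonally into one of $(A\times A^\vee)^4_{\ell^n}$, so a lift there restricts back to a lift on $A$. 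Neither point affects the logic of your sketch.
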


\section{Effective Kuga-Satake construction}
\label{sec:kugasatake}
Let $k$ be a number field with an embedding in $\C$
and $d$ a positive integer.

\begin{defi}
\label{def:pola}
A polarization (resp.\ quasi-polarization) of degree $2d$ on a $K3$ surface $S$
over $k$ a Galois-invariant
class in $\Pic(S_{\bar k})$ which is ample (resp.\ nef) and
has self-intersection $2d$.
A primitive polarization (or quasi-polarization) is one that is not a nontrivial
multiple of another polarization (or quasi-polarization).  
\end{defi}

\begin{rema}
Suppose $S$ is given by explicit equations.
These determine a very ample line bundle $L=\mathcal{O}_S(1)$.
We can effectively determine whether the polarization $L$
is primitive, and when it is not, we can produce explicitly
a finite extension $k'$ of $k$ and a primitive polarization represented
by a line bundle $L'$ on $S_{k'}$.
If we assume, further,
that $S(k_v)\ne \emptyset$ for all places $v$ of $k$,
then a standard descent argument
(see, e.g., \S4 of \cite{kteff}) produces effectively a line bundle
on $S$ whose base change to $S_{k'}$ is isomorphic to $L'$.
\end{rema}

For the remainder of the paper we make the following assumptions.

\begin{assu}
\label{assume}
We assume there is an effective construction of
$X^\circ$ over $k$ with $X^\circ_{\C}$ isomorphic to the period space
$\Gamma\backslash \mathbb{D}$ of
primitively quasi-polarized $K3$ surfaces of degree $2d$.
Given a $K3$ surface $S$ over $k$ with explicit equations and 
supplied with an explicitly given ample
polarizing class of degree $2d$,
we assume 
we can effectively produce the corresponding point in $X^\circ$.
\end{assu}

Let $n$ be a positive integer, greater than or equal to $3$.

The Kuga-Satake construction has been treated in
\cite{deligne}, \cite{vangeemen}, \cite{kugasatake}, and
\cite{rizovcrelle}.
Here we follow the treatment in \cite{rizovcrelle}, where the
relevant level structures are described explicitly and the result
is the existence of morphisms
\[
f^{ks}_{d,a,n,\gamma}:
\mathcal{F}_{2d,n^{\mathrm{sp}}}\to \mathcal{A}_{g,d',n}
\]
of moduli spaces defined over an explicit number field.
Here, there is a standard quadratic form $Q$ on the primitive $H^2$ lattice
$P$ of the $K3$ surface, whose even Clifford algebra will be denoted
$C^+(P)$, and
$a$ is an element of the opposite algebra $C^+(P)^{\mathrm{op}}$
satisfying certain conditions.
Then $d'$ depends explicitly on $a$ and $d$, and
$\gamma$ belongs to a nonempty finite index set.
We suppose these choices are fixed.
The morphism extends to a morphism of Baily-Borel compactifications.
The compactified
source and target spaces can be constructed as projective varieties
over an explicit number field (up to
finitely many candidates) using
Propositions \ref{gammacover} and \ref{neatbb} by the
observations of Remark \ref{whenknowDmodGamma}.
Then (again up to finitely many candidates) Proposition \ref{rigidity}
produces $f^{ks}_{d,a,n,\gamma}$.

The Kuga-Satake abelian variety $A$
associated to the polarized $K3$ surface $S$ has the following
characterization (cf.\ \cite{vangeemen}).

Let $e_1$, $\ldots$, $e_{21}$ be linearly independent
vectors in $P$
diagonalizing the quadratic form so that the span of $e_1$ and $e_2$ is
negative-definite and the span of $e_3$, $\ldots$, $e_{21}$ is
positive-definite.
Let $f_1$, $f_2\in P_\R$ satisfy
$f_1+if_2\in P^{2,0}$ and $Q(f_1)=-1$.
Then $f_1$ and $f_2$ determine an element
\[J:=f_1f_2\in C^+(P)_\R.\]
The element $J$ is independent of the choice of $f_1$ and $f_2$, and
determines a complex structure on $C^+(P)_\R$.

The $\C^*$-action on $C^+(P)_\R$
\[(a+bi)\cdot x:=(a-bJ)x\]
determines a Hodge structure of weight $1$ on $C^+(P)$.
For a suitable choice of sign $\pm$,
the element $\alpha:=\pm e_1e_2\in C^+(P)$ and anti-involution
$\iota:C^+(P)\to C^+(P)$,
\[\iota(e_{i_1}\cdots e_{i_m}):=e_{e_m}\cdots e_{e_1},\qquad
(i_1<\cdots<i_m),\]
determine a polarization
\[E:C^+(P)\times C^+(P)\to \Z,\qquad
(v,w)\mapsto tr(\alpha\iota(v)w)\]
where $tr(c)$ denote the trace of the map $x\mapsto cx$.
Then the Kuga-Satake abelian variety associated with the polarized $K3$
surface $S$ is
\[A:=(C^+(P)_{\R},J)/C^+(P),\]
which is a complex torus with
polarized Hodge structure, i.e., a polarized abelian variety over $\C$.

The following properties hold.
There is an injective ring homomorphism
\begin{equation}
\label{morphismu}
u:C^+(P)\to \End(H^1(A,\Z))
\end{equation}
compactible with the weight \emph{zero} Hodge structures on source and target.
The abelian variety $A$ will be defined over a number field,
and the homomorphism of $\Z_\ell$-modules obtained
from $u$ is a homomorphism of Galois modules.

\section{Computing the Picard group of a $K3$ surface}
\label{sec:picard}
Continuing with the assumptions and notation of the previous section, we have
\[P\to\End(C^+(P))\]
sending $v$ to $y\mapsto vye_1$.
This is an injective map of Hodge structures.
From $u$ we get
an injective homomorphism of Hodge structures
\[\End(C^+(P))\to H^2(A\times A,\Z)\]

The intersection of the $(0,0)$-part of $\End(C^+(P))_\C$ with
$\End(C^+(P))$ may be effectively computed by identifying
$\End(C^+(P))_\Q$ with a direct summand of
$H^2(A\times A,\Q)$, thereby reducing the computation to the determination
of the N\'eron-Severi group of a polarized
abelian variety.

\begin{prop}
\label{effectivePicK3}
Let $S$ be a $K3$ surface as in Assumption \ref{assume}.
Then there is an effective procedure to compute $\Pic(S_{\bar k})$
by means of generators with explicit equations over a finite extension of $k$.
\end{prop}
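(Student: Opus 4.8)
The plan is to combine the explicit Kuga--Satake morphism with the description of the Picard lattice via Hodge theory. First I would use Assumption \ref{assume} to produce the point of $X^\circ$ corresponding to $S$, hence (via the effective Baily--Borel constructions of Section \ref{sec:bb} and the rigidity Proposition \ref{rigidity}, up to finitely many candidates) the morphism $f^{ks}_{d,a,n,\gamma}$ and, specializing at the chosen point, an abelian variety $A$ over an explicit finite extension of $k$ given by explicit equations, together with the ring homomorphism $u$ of \eqref{morphismu}. Next I would invoke Proposition \ref{endav} applied to $A\times A$: after a further effective finite extension $k'$ we obtain generators of $\mathrm{NS}((A\times A)_{k'})=\mathrm{NS}(\overline{A\times A})$, hence the Galois-equivariant sublattice of Hodge classes in $H^2(A\times A,\Z)$. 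Pulling back along the injective Hodge-structure maps $P\hookrightarrow\End(C^+(P))$, $\End(C^+(P))\hookrightarrow H^2(A\times A,\Z)$ described in Section \ref{sec:picard}, and intersecting with the image, computes the $(0,0)$-classes in $\End(C^+(P))$, and thence the transcendental/algebraic splitting of $P$. By the Lefschetz $(1,1)$-theorem for the $K3$ surface, the algebraic part of $P$ together with the polarizing class spans $\Pic(S_{\bar k})$ up to finite index; refining by saturation (which is an effective lattice computation once the ambient $H^2(S_{\bar k},\Z)$ lattice and the polarization are known) yields $\Pic(S_{\bar k})$ as a Galois module, with the Galois action read off from that on $\mathrm{NS}(\overline{A\times A})$ via $u$.

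It remains to realize the resulting classes by explicit equations over a finite extension of $k$. For this I would argue as follows. Having computed the rank $\rho$ and an explicit basis of $\Pic(S_{\bar k})$ inside $H^2(S_{\bar k},\Z)$ (in particular the intersection form and the sublattice generated by $\mathcal{O}_S(1)$), for each basis class $D$ one knows a numerical class; writing $D$ in terms of $\mathcal{O}_S(1)$ and effective divisors one searches, by effective Hilbert scheme techniques (as in Lemmas \ref{finitemorlem} and \ref{hilbschemeargument}), for curves on $S_{\bar k}$ in the finitely many numerical classes of bounded degree that can occur as components, over a finite extension of $k$ where such curves become defined. That a curve in the predicted class exists is guaranteed by the $(1,1)$-theorem together with the computed intersection numbers (Riemann--Roch on the $K3$ surface forces effectivity of $D$ or $-D$), so the search terminates; normalizing and taking closures as in Section \ref{sec:effalggeo} gives explicit equations for the supporting divisors and hence for the line bundles.

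The main obstacle is the passage from the abstract, Hodge-theoretically computed lattice $\Pic(S_{\bar k})\subset H^2(S_{\bar k},\Z)$ to honest line bundles given by explicit equations: one must bound effectively the degrees (with respect to $\mathcal{O}_S(1)$) of representing divisors so that the Hilbert-scheme search is finite. I would handle this by using the computed intersection pairing: for a class $D$, the number $D\cdot\mathcal{O}_S(1)$ is already known, and the classes of irreducible curves that can appear as components of an effective representative of $D$ (or of $D+N\mathcal{O}_S(1)$ for explicit $N$, to reach a very ample twist) all have $\mathcal{O}_S(1)$-degree bounded by a quantity explicit in the entries of the intersection matrix. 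This reduces the realization problem to finitely many Hilbert schemes of curves of bounded degree in $S_{\bar k}\subset\PP^M$, each handled effectively, and the finite extension needed to define the resulting curves is itself effectively bounded. The other steps—the effective Kuga--Satake morphism, Proposition \ref{endav} for $A\times A$, and the linear algebra over $\Z$ extracting Hodge classes and performing saturation—are effective by the cited results and routine lattice computations.
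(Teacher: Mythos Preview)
Your overall architecture matches the paper's: Kuga--Satake, then $\mathrm{NS}(A\times A)$ via Proposition~\ref{endav}, then pull back through $P\hookrightarrow\End(C^+(P))\hookrightarrow H^2(A\times A,\Z)$, then a Hilbert-scheme search for representing divisors. The last step, with Riemann--Roch on a $K3$ forcing effectivity of $D$ or $-D$ and the degree bound coming from the known intersection pairing, is fine and is essentially what the paper means by ``for a choice of generators we have a degree bound, and by a Hilbert scheme argument we obtain algebraic representatives''.

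There is, however, a genuine gap at the comparison step. The Kuga--Satake morphism of moduli spaces, produced algebraically up to finitely many candidates, hands you an abelian variety $A^{ks}$ over a number field by explicit equations; it does \emph{not} hand you the ring homomorphism $u$ of \eqref{morphismu}, nor the embedding $\End(C^+(P))\hookrightarrow H^2(A^{ks}\times A^{ks},\Z)$. Those maps are defined via the complex-analytic description $A=(C^+(P)_\R,J)/C^+(P)$, and to use them you must produce an explicit identification $H_1(A^{ks}(\C),\Z)\cong C^+(P)$ as lattices (and, before that, single out the correct candidate among the finitely many possible Kuga--Satake morphisms). This identification is transcendental; it is \emph{not} a consequence of the algebraic rigidity arguments of Section~\ref{sec:bb}. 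Indeed, the introduction stresses that the Kuga--Satake correspondence is only \emph{conjecturally} algebraic, and that the point of the paper is to show that ``computations to bounded precision can replace an algebraic correspondence''.

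Concretely, what is missing from your argument is: compute $P\subset H^2(S(\C),\Z)$ and the period line $P^{2,0}$ to high precision (Remark~\ref{computetopology}), hence $J\in C^+(P)_\R$ and the polarized complex torus $A$ to high precision; evaluate theta functions to high precision to match $A$ with the correct algebraic candidate $A^{ks}$ and to write down the analytic isomorphism $A\eqto A^{ks}(\C)$; then transport the algebraically computed generators of $\mathrm{NS}(A^{ks}_\C\times A^{ks}_\C)$ across this isomorphism into $H^2(A\times A,\Z)$. Because integral cohomology classes are discrete, a computation to sufficiently high (and effectively bounded) precision pins them down exactly, and only then can your ``pull back and intersect with the image of $\End(C^+(P))$'' step be carried out. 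Without this transcendental bridge, you have $\mathrm{NS}(\overline{A^{ks}\times A^{ks}})$ on one side and the lattice $\End(C^+(P))$ on the other, with no effective map between them.
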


\begin{rema}
\label{computetopology}
We are interested in the computation of
$H_*(X(\C),\Z)$, where $X$ is a smooth projective variety over $k$.
This can be done effectively, as explained in \cite{whitney}, by embedding
$X(\C)$ in a Euclidean space, subdividing the Euclidean space into cubes,
and intersecting with $X(\C)$.
When $\dim X=2$ this has been treated in \cite{krexp}.
When $X(\C)$ is isomorphic to
a quotient $\C^N/\Lambda$ for some $N$ and lattice
$\Lambda\subset\C^N$ this is known and standard.
\end{rema}

\begin{proof}[Proof of Proposition \ref{effectivePicK3}]
By the assumptions, we may choose a lift in
$\mathcal{F}_{2d,n^{\mathrm{sp}}}$ of the moduli point of $S$,
and hence obtain a finite set of candidates for the
Kuga-Satake abelian variety.
We compute $P\subset H^2(S(\C),\Z)$ and $E:C^+(P)\times C^+(P)\to \Z$ exactly
and $J\in C^+(P)_\R$ to high precision.
Evaluating theta functions to high precision, we may identify the
correct image point of the Kuga-Satake morphism, let us say $A^{ks}$ defined
over a finite extension of $k$,
and we may obtain an
analytic map $\C^{2^{19}}\to A^{ks}({\C})$ giving rise to
$A\eqto A^{ks}(\C)$, to arbitrarily high precision.
Using Proposition \ref{endav} we compute representative cycles for generators
of $NS(A^{ks}_{\C}\times A^{ks}_{\C})$.
Computation to sufficiently high precision determines their classes in
$H^2(A\times A,\Z)$.
This determines the classes of type $(0,0)$ in
$\End(C^+(P))$, and therefore in $P$.
For a choice of generators we have a degree bound, and by
a Hilbert scheme argument we obtain algebraic representatives of generators,
defined over a finite extension of $k$.
\end{proof}

\section{Proof for good primes}
\label{sec:good}
We keep the notation of the previous section and let
$\Gamma=\Gal(\bar k/k)$.

\begin{prop}
\label{prop:good}
Let $S$ be a $K3$ surface as in Assumption \ref{assume}.
Then there exists, effectively, an $\ell_0$ such that for all primes
$\ell > \ell_0$ we have $\Br(S_{\bar k})^\Gamma_\ell=0$.
\end{prop}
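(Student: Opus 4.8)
The plan is to follow Section~4 of \cite{skorobogatovzarhin} and to keep track of the finitely many primes where that argument degenerates, the new input being the effective results of Section~\ref{sec:av}. Using Propositions~\ref{effectivePicK3} and \ref{endav} we first replace $k$, effectively, by a finite extension $k'$ over which $\Pic(S_{\bar k})$ is a constant Galois module and over which $\End(A)=\End_{\bar k}(A)$ (so that $\mathrm{NS}(A_{\bar k})$ is constant as a Galois module as well); since $\Br(S_{\bar k})^{\Gamma}_\ell\subseteq\Br(S_{\bar k})^{\Gal(\bar k/k')}_\ell$, it suffices to prove the statement over $k'$. We then fix, effectively, an integer $\ell_0$ exceeding: the residue characteristics of the places of bad reduction of $A$ (equivalently of $S$); the constant $M$ of Proposition~\ref{endmodl} applied to $A$; $\disc(\Pic(S_{\bar k}))$; the discriminant of the trace form on $\End(A)$; and the index of the image of the composite $P\to\End(C^+(P))\to\End(H^1(A,\Z))$, whose first map is $v\mapsto(y\mapsto vye_1)$ and whose second map is induced by $u$. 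Each of these is computable from the data produced in the previous sections.

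Fix a prime $\ell>\ell_0$ and write $\Lambda=H^2_{\mathrm{et}}(S_{\bar k},\Z_\ell(1))$. Since $\ell\nmid\disc(\Pic(S_{\bar k}))$ and $\Pic(S_{\bar k})$ is primitive in the unimodular lattice $H^2(S(\C),\Z)$, the sublattice $\Pic(S_{\bar k})\otimes\Z_\ell$ is a $\Gamma$-stable direct summand of $\Lambda$, with complement its orthogonal complement $T$, the transcendental $\Z_\ell$-lattice. The Kummer sequence identifies $\Br(S_{\bar k})[\ell^\infty]$ with $\bigl(\Lambda\otimes\Q_\ell/\Z_\ell\bigr)\big/\bigl(\Pic(S_{\bar k})\otimes\Q_\ell/\Z_\ell\bigr)\cong T\otimes\Q_\ell/\Z_\ell$, and because the displayed decomposition of $\Lambda$ is $\Gamma$-equivariant, taking $\Gamma$-invariants is exact here, so $\Br(S_{\bar k})^{\Gamma}_\ell=(T\otimes\Q_\ell/\Z_\ell)^{\Gamma}$. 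This group is nonzero precisely when $(T/\ell)^{\Gamma}\ne0$, so it remains to prove $(T/\ell)^{\Gamma}=0$.

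The composite map above realizes $P\otimes\Z_\ell$, hence its orthogonal-complement-of-$\Pic$ part $T$, as a $\Gamma$-submodule of $E:=\End\bigl(H^1_{\mathrm{et}}(A_{\bar k},\Z_\ell)\bigr)$, and for $\ell>\ell_0$ this embedding is saturated. By the Tate conjecture for divisors on the $K3$ surface $S$ --- which holds by the Kuga--Satake construction together with Faltings' theorem --- $T\otimes\Q_\ell$ has no nonzero $\Gamma$-invariant vector; as $H^1_{\mathrm{et}}(A_{\bar k},\Q_\ell)$, hence $E\otimes\Q_\ell$, is a semisimple $\Gamma$-module with $(E\otimes\Q_\ell)^{\Gamma}=\End(A)\otimes\Q_\ell$ by Faltings, $T\otimes\Q_\ell$ lies in the trace-form orthogonal complement of $\End(A)\otimes\Q_\ell$. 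Since $\ell$ does not divide the discriminant of the trace form on $\End(A)$, we obtain a $\Gamma$-equivariant orthogonal splitting $E=\bigl(\End(A)\otimes\Z_\ell\bigr)\oplus W$ with $T\subseteq W$ a direct summand; reducing modulo $\ell$ gives a $\Gamma$-equivariant splitting $E/\ell=\bigl(\End(A)/\ell\bigr)\oplus(W/\ell)$ together with a $\Gamma$-equivariant inclusion $T/\ell\hookrightarrow W/\ell$. Now Proposition~\ref{endmodl} applied to $A$ gives $(E/\ell)^{\Gamma}=\End_\Gamma(A_\ell)=\End(A)/\ell$ for $\ell>\ell_0$, and since $\Gamma$ acts trivially on $\End(A)$ this forces $(W/\ell)^{\Gamma}=0$. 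Hence $(T/\ell)^{\Gamma}\subseteq(W/\ell)^{\Gamma}=0$, as required.

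The main difficulty will be the effective assembly of a single $\ell_0$: controlling the constant $M$ of \cite{mw5} through Proposition~\ref{endmodl}, bounding the index of the Kuga--Satake lattice inclusion from the explicit construction of Section~\ref{sec:kugasatake}, and verifying that all of the integral splittings used above --- $\Lambda=\Pic(S_{\bar k})\otimes\Z_\ell\oplus T$, saturation of $T$ in $E$, and $E=\End(A)\otimes\Z_\ell\oplus W$ --- do hold for every $\ell$ beyond it. The only conceptual ingredient not already in place is the Tate conjecture for divisors on $S$, and that is precisely what the Kuga--Satake construction plus Faltings supplies; the rest is bookkeeping with lattices and discriminants.
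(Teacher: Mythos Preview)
Your reduction to $(T/\ell)^\Gamma=0$ is correct and matches the paper. The gap lies in the claim that the composite $P\to\End(C^+(P))\to\End(H^1(A,\Z))$, with first map $v\mapsto(y\mapsto vye_1)$, realises $T$ as a $\Gamma$-submodule of $E$. That map is a morphism of Hodge structures---which is all that Section~\ref{sec:picard} requires of it---but it is \emph{not} in general $\Gamma$-equivariant: for $\sigma\in\Gamma$ one computes
\[
\sigma\cdot(y\mapsto vye_1)=(y\mapsto\sigma(v)\,y\,\sigma(e_1)),
\]
so equivariance forces $\sigma(e_1)=e_1$, i.e.\ $e_1$ would have to be an algebraic class in $P$. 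When $\mathrm{rk}\,\Pic(S)=1$ one has $\Pic(S)\cap P=0$ and no such class exists; even when $\mathrm{rk}\,\Pic(S)\ge2$ you would have to re-choose $e_1$ inside $\Pic(S)\cap P$, which you have not done (and which is not the $e_1$ of Section~\ref{sec:kugasatake}). Without $\Gamma$-equivariance of the embedding, the subsequent inclusion $T/\ell\hookrightarrow W/\ell$ of $\Gamma$-modules, and hence $(T/\ell)^\Gamma\subset(W/\ell)^\Gamma$, is unjustified.

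This is exactly why the paper's proof uses a different embedding of $T_S$ into $\End(H_1(A,\Z))$. When $\mathrm{rk}\,\Pic(S)\ge2$ it fixes an algebraic (hence Galois-invariant) class $m\in P$ and maps $t\mapsto m\wedge t$ into $\wedge^2P\subset C^+(P)\hookrightarrow\End(H_1(A,\Z))$, which is visibly $\Gamma$-equivariant. When $\mathrm{rk}\,\Pic(S)=1$, so that $T_S=P$ and no such $m$ is available, it passes instead to $\wedge^{20}T_S\subset C^+(P)$ together with the identification $T_S/\ell\cong(\wedge^{20}T_S)/\ell$ coming from $\wedge^{21}T_S\cong\Z$. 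Once a genuinely Galois-equivariant embedding is in hand, the remainder of your argument---disjointness from $\End(A)/\ell$ and the application of Proposition~\ref{endmodl}---is essentially the paper's; your trace-form orthogonal splitting is a clean way to organise that disjointness step.
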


The rest of this section is devoted to the proof.
Proposition \ref{effectivePicK3} tells us that after suitably
extending $k$ we may suppose that $\Pic(S_{\bar k})$ is defined over $k$
and is known explicitly.
We suppose also that we have obtained the Kuga-Satake abelian variety
$A^{ks}$ with analytic map $A\eqto A^{ks}(\C)$ that can be computed to
arbitrarily high precision, and that
$\End(A^{ks}_{\bar k})$ is defined over $k$ and is known explicitly.
Then we may suppose that the subalgebra of
$\End(H_1(A,\Z))$ corresponding to endomorphisms of $A^{ks}$ has been
identified, i.e., that we have computed
\begin{equation}
\label{endosub}
\End(A)\subset \End(H_1(A(\C),\Z)).
\end{equation}

We have the exact sequence (cf.\ equation (5) of \cite{skorobogatovzarhin})
\begin{align*}
0 \to\Pic(S)/\ell^n \to
H^2&(S_{\bar k},\mu_{\ell^n})^\Gamma \to
\Br(S_{\bar k})_{\ell^n}^\Gamma \\
&{}\to H^1(k,\Pic(S_{\bar k})/\ell^n) \to
H^1(k,H^2(S_{\bar k},\mu_{\ell^n})).
\end{align*}
Let $\delta$ be the (absolute value of the) discriminant of the N\'eron-Severi group.
Then there is an exact sequence
\[0\to \Pic(S)\oplus T_S\to H^2(S(\C),\Z)\to K\to 0\]
where the cokernel $K$ is finite, of order $\delta$.
After tensoring with $\Z_\ell$ and using a comparison theorem this becomes
an exact sequence of Galois modules
\begin{equation}
\label{eqn:Kell}
0\to (\Pic(S)\otimes \Z_\ell)\oplus T_{S,\ell}\to
H^2(S_{\bar k},\Z_\ell(1))\to K_\ell\to 0
\end{equation}
for some finite $K_\ell$,
where $T_{S,\ell}$ is the submodule of $H^2(S_{\bar k},\Z_\ell(1))$
orthogonal to $\Pic(S)$; note that as abelian groups,
$T_{S,\ell}\cong T_S\otimes \Z_\ell$ and
$K_\ell\cong K\otimes \Z_\ell$.
In particular, for $\ell \nmid \delta$ and any $n$ we have
\[H^1(k,\Pic(S)/\ell^n)\hookrightarrow H^1(k,H^2(S_{\bar k},\mu_{\ell^n})),\]
and the 5-term exact sequence reduces to an isomorphism
\[(T_{S,\ell}/\ell^n)^\Gamma\eqto \Br(S_{\bar k})_{\ell^n}^\Gamma.\]

Applying transpose to
the homomorphism $u$ of \eqref{morphismu} we get a homomorphism
\[tu:C^+(P)\to \End(H_1(A,\Z)).\]

If $\Pic(S)$ has rank at least $2$, then we let $m\in P$ be an algebraic
class and construct
\[m\wedge T_S\subset \End(H_1(A(\C),\Z)).\]
By consideration of Hodge type, $\End(A)$ and $m\wedge T_S$ are
disjoint in $\End(H_1(A(\C),\Z))$.
Now, outside of finitely many $\ell$ (effectively) we have
an injective map
\[\End(A)/\ell\to \End(H_1(A(\C),\Z))/\ell\]
coming from \eqref{endosub} and a pair of injective maps
\[T_S/\ell\to (m\wedge T_S)/\ell\to \End(H_1(A(\C),\Z))/\ell,\]
such that the images in $\End(H_1(A(\C),\Z))/\ell$ are disjoint.

We use the natural isomorphism of Galois modules
\[A^{ks}_\ell\cong \Hom(H^1(A^{ks},\Z/\ell),\Z/\ell)\]
(cf.\ \cite[\S4.1]{skorobogatovzarhin}) and view
$(m\wedge T_{S,\ell})/\ell$ as a subgroup of $\End(A^{ks}_\ell)$.
So, we have an injective homomorphism of Galois modules
\[T_{S,\ell}/\ell\to \End(A^{ks}_\ell).\]

Applying Proposition \ref{endmodl} we have, away from an effectively
determined finite set of primes $\ell$, an isomorphism
\[\End(A^{ks})/\ell\eqto \End_\Gamma(A^{ks}_\ell).\]
We conclude, outside of an effectively determined finite set of primes $\ell$,
we have
\[(T_{S,\ell}/\ell)^\Gamma=0.\]

If $\Pic(S)$ has rank one, then we have $T_S=P$, and we repeat the
above argument using $\wedge^{20}T_S$ in place of
$m\wedge T_S$ and an identification of $T_S/\ell$ with
$(\wedge^{20}T_S)/\ell$ coming from $\wedge^{21}T_S\cong \Z$.

\section{Bad primes}
\label{sec:bad}
Here we refine the arguments of Section \ref{sec:good} to get an effective
bound on $\Br(S)/\Br(k)$.
We treat the primes excluded from consideration in Section \ref{sec:good}
one at a time, obtaining for each such prime $\ell$ an
effective bound on the order of the
$\ell$-primary subgroup of the image in $\Br(S_{\bar k})$ of
$\Br(S)$.
As in the previous section, we extend $k$ and assume that
$\Pic(S_{\bar k})$ is defined over $k$ and
the Kuga-Satake abelian variety together with its full
ring of geometric endomorphisms is defined over $k$.
We let $m$ be an integer such that the group
$K_\ell$ of \eqref{eqn:Kell} is $\ell^m$-torsion
and further extend $k$ so that
the group $\Br(S_{\bar k})_{\ell^m}$ is defined over $k$.
By \cite{kt} such a field extension may be produced effectively.
To obtain an effective bound on the order of the
$\ell$-primary subgroup of $\Br(S)$ in $\Br(S_{\bar k})$
it suffices
to produce an effective bound on
the order of the cokernel of
\[\Pic(S)/\ell^n\to
H^2(S_{\bar k},\mu_{\ell^n})^{\Gal(\bar k/k')}\]
that is independent of $n$.

The analysis of the previous section, in a refined form,
yields an effective bound for
$(T_{S,\ell}/\ell^n)^\Gamma$, independent of $n$.
Indeed, Proposition \ref{endmodl} yields the effective
annihilation of the cokernel of
$\End(A^{ks})\to \End_\Gamma(A^{ks}_{\ell^n})$.
In the portions of the argument where an injective homomorphism
of finitely generated abelian groups is tensored with
$\Z/\ell$, we obtain bounds independent of $n$
on the kernel of the homomorphism
tensored with $\Z/\ell^n$, rather than injective homomorphisms.
This suffices for the analysis.

Equation (1) of \cite{skorobogatovzarhin} yields an exact sequence
of Galois modules
\[0\to \Pic(S)/\ell^m\to H^2(S_{\bar k},\mu_{\ell^m})\to \Br(S_{\bar k})_{\ell^m}\to 0,\]
and therefore $\Gal(\bar k/k')$ acts trivially on
$H^2(S_{\bar k},\mu_{\ell^m})$.
By considering the sequence \eqref{eqn:Kell} tensored by
$\Z/\ell^m\Z$ it follows that $\Gal(\bar k/k')$ acts trivially on
$K_\ell$.

We consider $n\ge m$ in what follows.
Tensoring \eqref{eqn:Kell} with $\Z/\ell^n$ yields a four-term
exact sequence of Galois modules with one Tor term:
\begin{equation}
\label{eqn:fourterm}
0\to K_\ell\to
\Pic(S)/\ell^n\oplus T_{S,\ell}/\ell^n\to
H^2(S_{\bar k},\mu_{\ell^n})\to
K_\ell\to 0.
\end{equation}
Since $T_{S,\ell}/\ell^n\to H^2(S_{\bar k},\mu_{\ell^n})$ is
injective, it follows that
\begin{equation}
\label{eqn:isinjective}
K_\ell\to \Pic(S)/\ell^n
\end{equation}
is injective.

We split the exact sequence \eqref{eqn:fourterm} into two
short exact sequences
\begin{gather*}
0\to K_\ell\to
\Pic(S)/\ell^n\oplus T_{S,\ell}/\ell^n\to C\to 0,\\
0\to C\to
H^2(S_{\bar k},\mu_{\ell^n})\to
K_\ell\to 0.
\end{gather*}
This gives the long exact sequences of Galois cohomology
\begin{gather*}
K_\ell\hookrightarrow
\Pic(S)/\ell^n\oplus ( T_{S,\ell}/\ell^n )^\Gamma\to
C^\Gamma
\to H^1(\Gamma,K_\ell)
\to
H^1(\Gamma,\Pic(S)/\ell^n\oplus  T_{S,\ell}/\ell^n), \\
0\to
C^\Gamma\to
H^2(S_{\bar k},\mu_{\ell^n})^\Gamma
\to
K_\ell \to H^1(\Gamma, C).
\end{gather*}
Since \eqref{eqn:isinjective} is an injective homomorphism of
trivial Galois modules,
the first three terms of the top sequence split off as a short
exact sequence
\[
0\to K_\ell\to
\Pic(S)/\ell^n\oplus ( T_{S,\ell}/\ell^n )^\Gamma\to
C^\Gamma\to 0.
\]

We conclude by calculating that
\[
\frac{|H^2(S_{\bar k},\mu_{\ell^n})^\Gamma|}
{|\Pic(S)/\ell^n|}
\le
\frac{|K_\ell|\cdot |C^\Gamma|}
{|\Pic(S)/\ell^n|}=
|(T_{S,\ell}/\ell^n)^\Gamma|,
\]
which is bounded as explained above.

\end{document}